\newtheorem{thm}{Theorem}[section]
\newtheorem{lem}[thm]{Lemma}
\newtheorem{prp}[thm]{Proposition}
\theoremstyle{definition}
\newcommand{\scr}[1]{\mathscr #1}
\definecolor{wco}{rgb}{0.5,0.2,0.3}
\numberwithin{equation}{section}
\date{}
\begin{document}

\newcommand{\lra}{\longrightarrow}
\newcommand{\ua}{\uparrow}
\newcommand{\da}{\downarrow}
\newcommand{\dsp}{\displaystyle}

\def\R{{\mathbb R}}
\def\I{I}
\def\paral{/\kern-0.55ex/} 
\def\parals_#1{/\kern-0.55ex/_{\!#1}} 
\def\n#1{|\kern-0.24em|\kern-0.24em|#1|\kern-0.24em|\kern-0.24em|} 
\def\ev{{\mathop {\mathrm ev}}}
\def\Ric{{\mathop {\rm Ric}}}
\def\Hess{{\mathop {\mathrm Hess}}}
\def\div{\mathop {{\mathrm \mathbf div}}}
\def\Cov{{ {\rm \bf   Cov}}}
\def\Var{{{\rm \bf Var}}}
\def \vGamma{{{\bf \Gamma }}}

\newcommand{\p}{{\bf P}}
 \def\D{\mathbf  D}
\def\E{\mathbf E}
\def\EE{{\scr E}}
\def\H{{\mathbf H}}
\def\Z{\mathbf Z}

 \def\ff{\frac}
  \def\ss{\sqrt}
  \def\d{\text{\rm{d}}} \def\loc{\text{\rm{loc}}}
\def\dd{\delta}
 \def\DD{\Delta}
 \def\vv{\varepsilon}

\def\<{\langle} \def\>{\rangle}

\newcommand{\Ex}{{\bf E}} 
\def\si{\sigma} \def\ess{\text{\rm{ess}}}
\def\beg{\begin} \def\beq{\beg}
  \def\F{{\mathcal F}}
\def\e{\text{\rm{e}}}
\def\Cut{\text{\rm Cut}}
\def\supp{\text{\rm supp}}

\noindent %

\title{\bf A Concrete Estimate For The Weak Poincar\'{e} Inequality On Loop Space}

\author{Xin Chen, Xue-Mei Li and Bo Wu}

\date{}
\maketitle

\ \  \ \ \ \ \ \ Mathematics Institute, University of Warwick, Coventry CV4 7AL, U.K.

\begin{abstract}
The aim of the paper is to study the pinned Wiener measure on the loop space over a simply connected compact Riemannian manifold together with the Hilbert space structure induced by Mallianvin calculus and
the induced Ornstein-Uhlenbeck operator $d^*d$. We give a concrete estimate for the weak Poincar\'{e} inequality for the O-U Dirichlet form on loop space over simply connected compact Riemannian manifold
with strict positive Ricci curvature.

\end{abstract}

\section{Introduction}

{\bf A.} Let $M$ be a  compact Riemannian manifold. For $a, b\in M$, we consider the pinned path space $\Omega_{a,b}$ over $M$,  
$$ \Omega_{a,b}=\{\omega \in C([0,1], M); \omega(0)=a,\ \omega(1)=b\}, $$
which is a smooth Finsler manifold with compatible distance function
$$d_{\infty}(\omega,\gamma):=\sup_{s \in [0,1]}d(\omega(s), \gamma(s)).$$
When $a=b$,  we have  the loop space over $M$, based at $a$.  
Let $\F C_b^{\infty}(\Omega_{a,b})$ be the collection of smooth cylinder function on $\Omega_{a,b}$. Each $F \in \F C_b^{\infty}(\Omega_{a,b})$ is determined by a smooth function $f$ on $M^{n}$
and a partition $0<s_1<\dots<s_n<1$ of $[0,1]$:
\begin{equation}\label{c0.1}
F(\omega)=(\ev_{s_1, \dots, s_n})_*f=f(\omega(s_1), \dots \omega(s_n)).
\end{equation}

For each $T>0$, endow $\Omega_{a,b}$
 with the pinned Wiener measure  $\p  _{a,b}^T$,  which is derived by  pushing forward the standard 
Brownnian bridge measure on the space of the pinned curves of  $C([0,T];M)$ with starting point $a$ and  
ending point $b$, to  $C([0,1];M)$ through the  rescaling map $\omega(t)\mapsto \omega(\frac{t}{T})$.  
The measure $\p  _{a,b}^T$ can be equally defined through its integration over  smooth cylindrical functions
of type (\ref{c0.1}):
\begin{equation*}
\begin{split}
&\int F(\omega) \p  _{a,b}^{T}(d\omega)\\
&=\frac {1}{p_{T}(a,b)}\int_{M^n} \; f(x_1,x_2,\dots x_n)\cdot
p_{s_1 T}(a,x_1)p_{(s_2-s_1)T}(x_1,x_2)\dots
p_{(1-s_n)T}(x_n,b) \prod_{i=1}^n \d x_i .
\end{split}
\end{equation*}
where $p_t(x,y)$ is the heat kernel on $M$. Write $\p  _{a,b}$ for $\p  _{a,b}^1$ for simplicity with corresponding expectation denoted by $\E  _{a,b}$.

{\bf B. } Let $\omega(s)$ be the canonical process on $\Omega_{a,b}$, $\F_s$ be the natural filtration and
$\F=\F_1$. Then
$\omega(s)$ is a semi-martingale with $(\Omega_{a,b},\F, \F_s, \p_{a,b}^T)$,
 see \cite{Dr3}. Denote by $\parals_{s,t}(\omega):\ T_{\omega(s)} M \rightarrow T_{\omega(t)}M$
 the stochastic paralell translation along the continuous path $\omega(\cdot)$, which is $\p  _{a,b}^T$ a.s defined. 
Write $\parals_s=\parals_{0,s}$.  Let $\H$ be  the standard Cameron-Martin space on an Euclidean space $\R^n$, $$\H=\left\{\sigma: [0,1]\to \R^n \; \big |   \int_0^1| \dot \sigma(s)|^2 ds<\infty\right\}.$$
We identify $T_aM$ with $\R^n$ and define the Bismut's tangent space $\H _{\omega}^0$ in $\Omega_{a,b}$:
\begin{equation*}
\H _{\omega}^0=\left\{\parals_\cdot(\omega) h_\cdot \; \big| \;   h\in H ,    h_0=0, h_1=0 \right\},
\end{equation*}
which is a Hilbert space with the inner product:
\begin{equation*}
(X,Y)_{\H _{\omega}^0}=\int_0^1
\Big\<\frac{d}{dt}(\parals_{t,0}(\omega)X_t)),
\frac{d}{dt}(\parals_{t,0}(\omega)Y_t))\Big\>_{T_a M}\d t
\end{equation*}
and corresponding norm $|-|_{\H_\omega^0}$.

Consider the differential operator  $d$ which sends a differentiable function on $\Omega_{a,b}$
(viewed as a Finsler manifold) to a differential 1-form. For $F$ a smooth cylindrical function,  $dF$ as a bounded linear map on $T\Omega_{a,b}$ can be considered as a bounded linear map on Bismut tangent space.
 By the Riesz representation theorem, there is the  $H^1$ gradient $\D _0F(\omega) \in \H _{\omega}^0$, given by  
 \begin{equation*}      
(\D _0F(\omega),X_h)_{\H _{\omega}^0}=d F(X_h),
\end{equation*}
for all vectors $X_h$ of the form $X_h(s)=\paral_s(\omega)h_s$ in $\H_\omega^0$.
In particular, for cylindrical function $F=(ev_{s_1, \dots, s_n})_*f$ of the form (\ref{c0.1}),
\begin{equation*}
\D _0F(\omega)(t)=\sum_{i=1}^{n}\parals_{s_i,t}(\omega)
\nabla_{i}f(\omega(s_1),\omega(s_2),\dots\omega(s_n))\cdot
G_0(s_i,t) 
\end{equation*}
where  $\nabla_{i}f\in T_{\omega(s_i)}M$ is the value at $\omega(s_i)$ of the gradient  of $f$ as a function of the $i$th variable at the point  $(\omega(s_1),\omega(s_2),\dots\omega(s_n))$ and $G_0(s,t)=s\wedge t-s\cdot t$, $0\leqslant s,t \leqslant 1$,  is the Green function  of the Gaussian measure on $\mathbb{R}^n$.  Also 
\begin{equation}\label{c0.2}
\begin{split}
|\D_0F|^2_{\H _{\omega}^0}
=\sum_{i,j=1}^n G_0(s_i,s_j)\cdot &\langle \parals_{s_i,s_j}(\omega)
\nabla_{i}f(\omega(s_1),\dots\omega(s_n)),
\nabla_{j}f(\omega(s_1),\dots\omega(s_n))
\rangle_{\omega(s_j)}
\end{split} 
\end{equation}

For each $T>0$, the  quadratic form  defined on smooth cylinder function by 
\begin{equation*}
\widetilde{\scr{E}}_{a,b}^{T}(F,F):=\int_{\Omega_{a,b}} |\D_0F|^2_{\H_\omega^0}
\p  _{a,b}^{T}(d\omega),  
\end{equation*}
can be extended to a Dirichlet form $\scr{E}_{a,b}^{T}$, which is due to an integration by parts formula, see \cite{Dr3}.
The domain of the Dirichlet form is $\scr{D}(\scr{E}_{a,b}^T)$ is the the same 
as the domain of the closure of the gradient
operator $\D_0$. Follow the custom, we call this  Dirichlet form the O-U Dirichlet form. And 
we denote $\scr{E}_{a,b}$ for $\scr{E}_{a,b}^1$ for simplicity. 

\medskip

If $\mu$ is a probability measure, we denote by $\E[F; \mu]$ the average of a 
function $F \in L^2(\mu)$ with respect to this measure and 
$\Var(F;\mu)=\E (F^2; \mu)-[\E(F; \mu)]^2$ the corresponding variance. The main theorem of the paper is:

\medskip

{\bf Theorem \ref {MainTheorem}. }  Let $M$ be a  simply connected compact manifold  with strict
positive Ricci curvature.  For any small $\alpha>0$, there exists a constant $s_0>0$ such
that the following weak Poincar\'{e}  inequality holds, i.e.
\begin{equation*}
\Var(F;\p  _{a,a})\leqslant
\frac{1}{s^\alpha}\EE_{a,a}(F,F) +
s||F||_{\infty}^2,\quad s\in(0,s_0),\ \ F\in \scr{D}(\scr{E}_{a,a}).
\end{equation*}
 And the constant $s_0$ does not depend on the starting point $a \in M$. 

\bigskip

 {\bf C. Historical Remark.}
 The Ornstein-Uhlenbeck   operator and the Ornstein-Uhlenbeck process plays an important role in the development of the $L^2$ theory on loop spaces, c.f. \cite{ELi3}.
  The study of the functional inequalities for O-U Dirichlet form with respect to  the Wiener measure (on path space) and to the pinned Wiener measure (on loop space) goes back a long way. For the Wiener measure on path space
over a compact manifold, it turns out that there is no fundamental topological or geometrical obstruction to the 
  validity of the Poincar\'{e} inequality. See e.g. the work of Fang  \cite{Fa} for the existence of a Poincar\'{e}  inequality for O-U Dirichlet form  and that of Hsu \cite{Hsu} for the  existence of a logarithmic Sobolev inequality. 

 But for the case of loop space over 
a compact manifold $M$, the problem seems much more complicated. Gross  \cite{Gr} pointed out that Logarithmic Sobolev inequality does not hold for O-U Dirichlet form when $M=S^1$ and he proved instead a Logarithmic Sobolev inequality  plus a potential term when $M$ was a compact Lie group.  In general the geometry and the topology of the manifold will play a significant role. In particular a Poincar\'{e} inequality does not hold for the Dirichlet form
with respect to pinned Winner measure if the underlying manifold is not simply connected, as the indicator function of each connected component of the loop space is in the domain of the O-U Dirichlet
form, see  Aida \cite{Ai3}. Furthermore, in \cite{EB}, Eberle constructed a simply connected compact Riemannian manifold on the loop space over which the Poincar\'{e}  inequality for O-U Dirichlet form did not hold. As transpired in his proof,  the validity of the Poincar\'{e}  
inequality may depend on the starting point of the based loop space. A Clark-Ocone formula with a potential was deduced by  Gong and Ma  \cite{GM}, which led to their discovery of a  Logarithmic Sobolev inequality with a potential
on loop space over general compact manifold. See also Aida \cite{Ai1}. In their results, the simply connected condition is not needed for the underlying manifold. Aida \cite{Ai4}, on the other hand, deduced a Clark-Ocone formula which led to a Logarithmic Sobolev inequality for a modified Dirichlet form,  under suitable conditions on the small time asymptotics of the Hessian of the logarithm of the heat kernel of the underlying manifold. Built on that, a  Poincar\'{e}  inequality is shown to hold for the O-U Dirichlet form on the loop space over hyperbolic space, see  Chen-Li-Wu \cite{CLW}.  

Another development in the positive direction comes from Eberle \cite{Ai2},  where it was shown that a local Poincar\'{e} ineqaulity  hold for the O-U Dirichlet form on loop space over compact manifold. A parallel result was given by Aida  \cite{Ai2}: when $M$ was simply connected, the O-U Dirichlet form had  the weak spectral gap property. By the weak spectral gap property for a
Dirichlet form $\scr{E}$ in $L^2(\p  )$ it is meant  that $F_n\rightarrow 0$ in probability for any sequence of functions 
$\{F_n\}_{n=1}^{\infty}\subset \scr{D}(\scr{E})$ satisfying the following 
conditions, 
\begin{equation*}
\sup_n ||F_n||_{L^2}\leqslant 1, \quad \ E(F)=0, \quad \lim_{n\rightarrow \infty}\scr{E}(F,F)=0, 
\end{equation*}
 see also Kusuoka \cite{Ku}. Although we do not know the relation between Eberle's local Poincar\'{e} ineqaulity and Aida's weak spectral gap property, it was noted in R\"{o}ckner and Wang in \cite{Wang} , the weak spectral gap property  was equivalent to the following weak Poincar\'{e}  inequality: 
 \begin{equation*}
\Var_{\mu}(f) \leqslant \beta(s)\scr{E}(f,f) +s ||f||_{\infty}^2, 
\ s\in (0,s_0)\ \ \ f\in \scr{D}(\scr{E})\cap L^{\infty}(\mu) \end{equation*}
Here $\beta:\R^+\rightarrow \R^+$ is a non-increasing function and
$s_0>0$ is a constant. 
And in \cite{Ai5}, Aida used such 
weak Poincar\'{e} inequality to give an estimate on the spectral gap of a Schr\"{o}dinger operator on the loop 
space. We refer the reader to Wang \cite{W2} for  analysis, development and historical references on such inequalities. 
Our contribution here is the  concrete estimate of $\beta(s)$ in the  inequality above. The main difficulty here is to find suitable exhausting local sets  replacing the role played by geodesic balls in the proof
of weak Poincar\'{e} inequality on finite dimensional manifolds (see \cite{W2}).  
The local sets Eberle taking in \cite{EB1} are not suitable for 
our proof. So in our approach, we use a different collection of  local sets. On such 
local sets, we do not derive the exact local Poincar\'{e} inequality, some additional term of 
the $L^{\infty}$ norm will appear in the estimate, but finally we can control such terms 
to get a global weak Poincar\'{e} inequality.

The  paper is organised as follows. In Section 2, we introduce notation and state some results, especially that of Eberle \cite{EB1, EB2} on which our proof is based on. In Section 3, we give some variance estimate for small time.  In Section 4,
A weak Poincar\'{e} type inequality  for the distribution of  the Brownian bridge evaluated at $N$ equal time intervals is given. We use  a combination of small time asymptotics and Poincar\'{e} inequality for the Wiener measure to control the growth of the constants with $N$. In paritcular, some of the 
methods in this section are inspired by \cite{EB2} and \cite{HKS}. In section 5, the main theorem is proved by reducing the variance of a function on the loop space to the variance of a function on a product manifold which is localized to subsets which are chains of small geodesic balls, and the variance of functions  on some sub-path with respect to pinned Wiener measure with small time parameter.  

\bigskip

\noindent
{\bf{Acknowledgment.}}  We would thank Feng-Yu Wang for inspiring discussions and Courant Institute for its hospitality 
during the completion of the work. This research is supported by  the EPSRC (EP/E058124/1).


\section{Notations and known results}

Let $\{B_s\}$ be the $T_a M$ valued stochastic anti-development of the canonical process
$\omega(s)$, which is a semi-martingale with $(\Omega_{a,b},\F, \F_s, \p_{a,b}^T)$. It is however not a 
Brownian motion, see \cite{Dr3}.  Denote by $L(\R^n; T_aM)$ the set of all linear maps 
from $\R^n$ to $T_aM$. 

\begin{lem} [\cite{EB1}]  \label{l1}
Let $\{A_s(\omega), \omega\in \Omega_{a,b}, 0\leqslant s\leqslant 1\}$ be a $L(\R^n; T_aM)$ valued adapted process  such that
$s\to A_s(\omega)$ is $C^1$ for every $\omega$ and 
$$\sup_{\omega \in \Omega_{a,b}}\sup_{s \in [0,1]}|A'_s(\omega)|< \infty. $$
Suppose that $H_s(\omega)=A_s(\omega)h_s$ for  some $h\in \H$ with $h_1=0$ and 
 $X_\cdot(\omega)=\parals_\cdot(\omega)A_\cdot(\omega)h_\cdot$. 
Define
 $$\delta^T _uX:=\int_0^u\big[ T^{-1}H'_s+ \frac{1}{2}\parals_s^{-1}\Ric^{\#}_{\omega(s)}(\parals_s H_s)\big]dB_s,\ \ 0\leqslant u<1$$
Then  $$\delta^T X:=\lim_{u\to 1} \delta_u^T X$$ exists in $L^1(\Omega_{a,b};\p  _{a,b}^T)$  and the limit 
is in $L^2(\Omega_{a,b};\p  _{a,b}^T)$. 
\end{lem}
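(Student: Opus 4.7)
The basic obstacle is that, under the pinned measure $\p_{a,b}^T$, the anti-development $B_s$ is not a Brownian motion but a semimartingale whose bounded-variation part arises from the Doob $h$-transform conditioning on $\omega(1)=b$; this part has speed growing like $(1-s)^{-1}$ as $s \uparrow 1$. My plan is to write the decomposition $B_s=\beta_s+V_s$, with $\beta_s$ a $T_a M$-valued $\F_s$-Brownian motion and $V_s$ of bounded variation, set $Z_s:=T^{-1}H'_s+\tfrac12 \parals_s^{-1}\Ric^{\#}_{\omega(s)}(\parals_s H_s)$, and split
\[
\delta_u^T X \;=\; \int_0^u Z_s\,d\beta_s \;+\; \int_0^u Z_s\,dV_s,
\]
then analyse the two summands separately as $u\uparrow 1$.

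For the It\^o piece I would argue as follows. The hypotheses that $A_\cdot$ and $A'_\cdot$ are uniformly bounded, that $h\in \H$ (so $h$ is bounded and $h'\in L^2$), and that $\Ric$ is bounded on the compact manifold $M$, combine to give $\int_0^1 \E_{a,b}^T|Z_s|^2\,ds<\infty$. The It\^o isometry then yields $L^2(\p_{a,b}^T)$-convergence of the martingale piece as $u\uparrow 1$, so this half of the decomposition already contributes a limit in $L^2$.

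The hard part is the bounded-variation integral $\int_0^u Z_s\,dV_s$, since $|dV_s|$ is not integrable near $s=1$. The crucial observation is that $Z_s$ is not an arbitrary integrand: it is exactly the integrand dictated by the Bismut integration-by-parts formula on the free path space $C([0,T],M)$, the Ricci term being chosen precisely so that, up to a martingale, $\delta_u^T X$ reproduces the Malliavin divergence of the truncated vector field $X\cdot 1_{[0,u]}$ on the free path space. To convert this structural fact into convergence, I would combine the boundary condition $h_1=0$---which, since $h$ is absolutely continuous with $L^2$-derivative, forces $|H_s|\leqslant C\sqrt{1-s}$---with standard Gaussian-type gradient bounds on the manifold heat kernel to control $\E_{a,b}^T|dV_s/ds|$ near $s=1$. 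The expectation is that the drift integrals form a Cauchy family in $L^1(\p_{a,b}^T)$ while remaining bounded in $L^2$, so that weak compactness places the limit in $L^2(\p_{a,b}^T)$; the Ricci term in $Z_s$ is essential here, as it absorbs the part of the singular drift $dV_s$ that is not cancelled by $T^{-1}H'_s$. This $L^1$-convergence of the drift piece is the single delicate step of the argument; once it is handled, adding back the martingale contribution yields both the existence of the limit in $L^1$ and the fact that the limit lies in $L^2$.
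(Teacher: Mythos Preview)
The paper does not prove this lemma; it is quoted directly from Eberle \cite{EB1}, so there is no in-paper argument to compare against. Your decomposition $B_s=\beta_s+V_s$ and the treatment of the martingale piece $\int_0^u Z_s\,d\beta_s$ are correct and standard. The gap is entirely in your handling of the drift integral, and it is a genuine gap rather than just vagueness.

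First, the role you assign to the Ricci term is wrong. The term $\tfrac12\parals_s^{-1}\Ric^\#(\parals_s H_s)$ is bounded by $C|H_s|\leqslant C\sqrt{1-s}$; against a drift of expected size $(1-s)^{-1/2}$ it contributes something uniformly integrable on $[0,1]$, so it is harmless but it does not ``absorb'' anything. The dangerous part of $Z_s$ is $T^{-1}A_s h'_s$, which carries no decay as $s\uparrow 1$ beyond $h'\in L^2$. Second, and more seriously, your proposed bound does not close: the heat-kernel gradient estimate gives $\E_{a,b}^T|dV_s/ds|\leqslant C(1-s)^{-1/2}$, and for general $h\in\H$ the quantity $\int_0^1 |h'_s|\,(1-s)^{-1/2}\,ds$ need not be finite (take $h'_s\sim (1-s)^{-1/2}/\log\tfrac{e}{1-s}$ near $s=1$: this is in $L^2$ but $|h'_s|(1-s)^{-1/2}$ is not in $L^1$). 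So the ``Cauchy in $L^1$ via pointwise domination'' route fails, and the same obstruction blocks your claimed uniform $L^2$ bound on the drift piece.

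What actually makes the lemma work is that one does not estimate $\int Z_s\,dV_s$ term by term. In Eberle's and Driver's treatments the $L^2$ control of $\delta^T X$ is obtained structurally: either via Driver's quasi-invariance theorem \cite{Dr3}, which produces the Radon--Nikodym derivative of the flow of $X$ on the pinned space and hence gives $\delta^T X\in L^2(\p_{a,b}^T)$ \emph{a priori}, with the stochastic-integral formula identified afterwards; or via an integration-by-parts in the time variable $s$ that trades $\int \langle H'_s,W_s\rangle\,ds$ for a boundary term $\langle H_u,W_u\rangle$ (controlled because $|H_u|\leqslant C\sqrt{1-u}$ exactly cancels the $(1-u)^{-1/2}$ growth of the drift) plus an integral against $dW_s$, whose singular bounded-variation part is again paired with $H_s$ rather than $H'_s$. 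Either way the condition $h_1=0$ enters through $|H_s|\leqslant C\sqrt{1-s}$ at the level of $H_s$ itself, not through any cancellation involving the Ricci curvature.
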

If $a,b \in M$ are not in the cut locus of each other, we take  $A_{\cdot}$ such that 
$\parals_{\cdot}A_{\cdot}$ is the damped stochastic parallel transport and take $H_{\cdot}$ to be parallel push back of the Jacobi fields 
along the unique geodesic connecting $a$ and $b$ with initial vector $v$. By a result of Malliavin-Stroock, the variance of $\delta^T X$ defined
in above lemma with respect to $P^T_{a,b}$ are uniformly bounded for $a,b, v, T$ in compact sets. In fact, we have the following lemma, 
\begin{lem} [ \cite{EB1}, \cite{MS}]
Let $b \in M \setminus \text{Cut}(a)$, $\ v\in T_aM$, and $T>0$. There is a vector 
$X_s^{T,a,b,v}=\paral_s H_s$, with initial value $X_0^{T,a,b,v}=v$  for $H$ as in Lemma 2.1, such that for every $T_0 >0$ and 
$r\in (0,\text{inj}_M)$, 
 \begin{equation*}
\sup_{T \in (0,T_0]}\rho(T,r)<\infty,
\end{equation*}
where
\begin{equation}\label{d1.10}
\rho(T,r):=\sup_{a,b\in M, d(a,b)\leqslant r,\ v \in T_aM, |v|=1}\big\{\Var\big(\delta^T X^{T,a,b,v}; \p  _{a,b}^T \big)\big\}.
\end{equation}
\end{lem}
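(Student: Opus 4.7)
The plan is to construct $X^{T,a,b,v}$ explicitly from a Jacobi field along the minimizing geodesic joining $a$ to $b$ together with the damped parallel translation along the Brownian bridge, and then to extract a uniform variance estimate by matching the small-time singularity of the bridge drift against the Jacobi-field structure, in the spirit of \cite{MS}.

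For the construction, I would first use the assumption $d(a,b)\leqslant r<\mathrm{inj}_M$ and $b\notin\mathrm{Cut}(a)$ to select the unique minimizing geodesic $\gamma:[0,1]\to M$ from $a$ to $b$, and then the unique Jacobi field $J$ along $\gamma$ with $J_0=v$ and $J_1=0$. Setting $h_s:=\tau_s^{-1}J_s$, where $\tau_s$ denotes deterministic parallel translation along $\gamma$, produces a smooth curve in $T_aM$ with $h_0=v$, $h_1=0$, and with $C^1$ bounds uniform in $(a,b,v)$ subject to $d(a,b)\leqslant r$ and $|v|=1$, by standard Jacobi-field comparison on the compact manifold. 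For $A$, I would take the damped parallel translation along $\omega$, namely the solution of
\[
\frac{dA_s}{ds}=-\frac{T}{2}\,\parals_s^{-1}\Ric^{\#}_{\omega(s)}\parals_s\circ A_s,\qquad A_0=\mathrm{id},
\]
so that $H_s:=A_s h_s$ satisfies $H_1=0$ and is crafted precisely so that the Ricci term in Lemma~\ref{l1} cancels, yielding
\[
\delta^T X=T^{-1}\int_0^1 A_s(\omega)\,h'_s\,dB_s.
\]
The boundedness of $\mathrm{Ric}$ on the compact $M$ ensures $A_s$ is $C^1$ in $s$ with $\sup_{\omega,s}|A'_s(\omega)|<\infty$, so all the hypotheses of Lemma~\ref{l1} are in place and $\delta^T X\in L^2(\Omega_{a,b};\p_{a,b}^T)$.

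To bound the variance, I would use the semimartingale decomposition of $B_s$ under $\p_{a,b}^T$, see \cite{Dr3},
\[
B_s=\widetilde B_s+\int_0^s \parals_u^{-1}\nabla\log p_{T(1-u)}(\omega(u),b)\,du,
\]
where $\widetilde B$ is a $T_aM$-valued Brownian motion on $[0,1)$. The martingale part of $\delta^T X$ is bounded by It\^o's isometry using the uniform bound on $|A_s h'_s|$, naively of order $T^{-2}$, while the drift part is also naively of order $T^{-2}$. The required cancellation between the two contributions stems from Varadhan-type short-time expansions $\nabla\log p_t(x,y)\sim -t^{-1}\exp_x^{-1}(y)$, whose leading singular part, viewed along $\gamma$, is proportional to the direction encoded in $h'$ by the Jacobi field.

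The main obstacle is precisely making these cancellations uniform in $(a,b,v)$ with $d(a,b)\leqslant r$ and in $T\in(0,T_0]$. This is the content of the Malliavin–Stroock estimates of \cite{MS} on the short-time asymptotics of $\nabla\log p_t$ and its gradient on a compact manifold, which, combined with uniform bounds on Jacobi fields and on damped parallel translation on $M$, yield $\sup_{T\in(0,T_0]}\rho(T,r)<\infty$ as required.
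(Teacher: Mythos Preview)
Your proposal is correct and aligns with the paper's treatment. The paper does not prove this lemma; it merely records the construction---damped stochastic parallel translation $A_\cdot$ composed with the parallel pullback $h_\cdot$ of the Jacobi field along the minimizing geodesic from $a$ to $b$ with $J_0=v$, $J_1=0$---and then cites Malliavin--Stroock \cite{MS} (via Eberle \cite{EB1}) for the uniform variance bound. Your sketch reproduces exactly this construction, verifies that the choice of $A$ cancels the Ricci term so that $\delta^T X = T^{-1}\int_0^1 A_s h'_s\, dB_s$, and correctly identifies that the uniform control of the variance as $T\downarrow 0$ hinges on the short-time heat-kernel gradient estimates of \cite{MS}.

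One minor remark: your naive order count for the martingale part is slightly off. Under $\p_{a,b}^T$ the anti-development $B_s$ decomposes as $\beta_s + \int_0^s \parals_u^{-1}\nabla\log p_{T(1-u)}(\omega(u),b)\,du$ where $\beta$ has quadratic variation $T\,ds$, so the It\^o isometry gives an $O(T^{-1})$ bound on the martingale contribution rather than $O(T^{-2})$. This does not affect the argument, since the real work---showing that the drift contribution does not blow up either---is precisely what the Malliavin--Stroock estimates supply, and you correctly defer to \cite{MS} for that.
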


The next lemma deals with 
the derivative with  starting point of the expectation under pinned Wiener measure,

\begin{lem}[\cite{EB1}]
Let $v \in T_a M$. For each $X_s=\parals_s(\omega)H_s(\omega)$ with 
$H_s(\omega)$ as in  lemma 2.1, and 
that $X_0=v\ \p  ^T_{a,b}$ a.s,
\begin{equation}
d_a\big(\E  ^T_{.,b}[F]\big)[v]=\E^T_{a,b}[dF(X)]-
\Cov\big(\delta^T X,F;\p^T_{a,b}\big) 
\end{equation}
for all smooth cylinder function $F\in\F C_b^\infty(\Omega_{a,b})$.
\end{lem}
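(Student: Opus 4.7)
The plan is to first establish the identity for smooth cylindrical $F=(ev_{Ts_1,\ldots,Ts_n})_*f$ by direct differentiation of the heat-kernel representation
\begin{equation*}
\E^T_{a,b}[F] = \frac{1}{p_T(a,b)}\int_{M^n} f(x_1,\dots,x_n)\, p_{Ts_1}(a,x_1)\prod_{i=2}^n p_{T(s_i-s_{i-1})}(x_{i-1},x_i)\, p_{T(1-s_n)}(x_n,b)\,dx,
\end{equation*}
and then extend to $\scr{D}(\scr{E}^T_{a,b})$ by density. Differentiation in $a$ along $v$ produces two pieces: the internal derivative $d_a p_{Ts_1}(a,x_1)[v]$, and the normalization contribution $-\E^T_{a,b}[F]\cdot d_a\log p_T(a,b)[v]$ coming from the factor $1/p_T(a,b)$.

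Next I would rewrite these heat-kernel derivatives probabilistically via Bismut--Driver integration-by-parts for the unpinned Wiener semigroup. For the damped Cameron--Martin field $X_s=\parals_s H_s$ with $H_0=v$ and $H$ as in Lemma 2.1, quasi-invariance of the Wiener measure under the corresponding flow rewrites $d_a p_t(a,\cdot)[v]$ as the expectation of a parallel-transported directional derivative plus a correction term given by a stochastic integral against $dB_s$. Applying this successively on $[0,Ts_1]$, $[Ts_1,Ts_2]$, \dots, $[Ts_n,T]$ and reassembling the segmented expectations via the Markov property into a single pinned expectation, the pointwise directional derivatives of $f$ combine into $\E^T_{a,b}[dF(X)]$, while the stochastic-integral corrections accumulate, for $u<1$, into $-\E^T_{a,b}[F\cdot\delta^T_u X]$. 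The Ricci-damping term $\tfrac12\parals_s^{-1}\Ric^{\#}_{\omega(s)}(\parals_s H_s)$ in the integrand of $\delta^T X$ emerges precisely because damped parallel transport is the exact variation of the stochastic development under a Cameron--Martin perturbation that remains compatible with the endpoint conditioning $\omega_T=b$. Letting $u\to 1$ using the $L^1$ convergence of Lemma~2.1 and noting that the case $F\equiv 1$ forces $\E^T_{a,b}[\delta^T X]=0$ (since the LHS of the target identity vanishes), one converts $\E^T_{a,b}[F\cdot\delta^T X]$ into $\Cov(\delta^T X, F;\p^T_{a,b})$, giving the stated formula.

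Extension to general $F\in\scr{D}(\scr{E}^T_{a,b})$ is standard by $L^2$-density of cylindrical functions combined with the $L^2$-bound on $\delta^T X$ from Lemma~2.1: Cauchy--Schwarz controls the covariance term, and the gradient term is controlled by $\|\D_0 F\|_{L^2}$ paired with $\sup_s|X_s|$. The main obstacle is the bookkeeping in the middle step: matching the precise integrand $T^{-1}H'_s+\tfrac12\parals_s^{-1}\Ric^{\#}_{\omega(s)}(\parals_s H_s)$ of $\delta^T X$ against the sum of stochastic integrals generated by successive applications of Bismut's formula across all the heat-kernel factors, and carefully handling the endpoint $s\to 1$ where $H_s$ vanishes but the stochastic integral only converges in $L^1$ (not $L^2$); Lemma~2.1 is tailored exactly to this issue.
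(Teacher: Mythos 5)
The paper does not actually prove this lemma: it is quoted from Eberle \cite{EB1} with no argument, and Eberle's derivation in turn rests on Driver's quasi-invariance theorem for the pinned Wiener measure \cite{Dr3}, differentiated through the flow generated by $X$. So there is no proof in this paper to compare against; you were reconstructing an external result from scratch.

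Your sketch goes in a sensible direction, and your observation that the $F\equiv 1$ case forces $\E^T_{a,b}[\delta^T X]=0$ — hence $\E^T_{a,b}[F\,\delta^T X]=\Cov(\delta^T X,F;\p^T_{a,b})$ — is a correct way to finish. But as a proof it is genuinely incomplete: you explicitly defer "the main obstacle," namely matching the accumulated segment-by-segment stochastic-integral corrections to the precise integrand $T^{-1}H'_s+\tfrac12\parals_s^{-1}\Ric^\#_{\omega(s)}(\parals_s H_s)$ and passing to the $u\to 1$ limit, and that matching is essentially the entire content of the lemma. Two further difficulties are glossed over. First, the statement is asserted for \emph{every} $X=\parals_\cdot A_\cdot h_\cdot$ from Lemma 2.1 with $X_0=v$, $X_1=0$, not just the damped Jacobi field of Lemma 2.2; Bismut's formula for $d_a p_t(a,\cdot)[v]$ is rigidly tied to one specific $H_s$ solving the Ricci ODE, so a segment-wise Bismut-formula argument does not obviously accommodate the arbitrary adapted factor $A_s$ and arbitrary $h\in\H$ — what is actually needed is Driver's pinned integration by parts, not chained Bismut identities. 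Second, $B_s$ in the definition of $\delta^T X$ is the anti-development under $\p^T_{a,b}$, which carries the drift $\parals_s^{-1}\nabla\log p_{(1-s)T}(\omega(s),b)$ and is not a Brownian motion; your sketch never explains how those drift contributions across the segments combine with, or cancel against, the normalization term $-\E^T_{a,b}[F]\cdot d_a\log p_T(a,b)[v]$ produced by differentiating $1/p_T(a,b)$. That cancellation is precisely where the hard bookkeeping you defer actually lives, so the argument as written is a plan rather than a proof.
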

 
For two paths $\omega_1, \omega_2$ with
$\omega_1(1)=\omega_2(0)$, define $\omega_1\vee\omega_2$ as
following:
\begin{equation*}
\omega_1\vee\omega_2(s)=
\begin{cases}
\omega_1(2s) \ &\text{if}\ s\in [0,1/2],\\
\omega_2(2s-1)\ &\text{if}\ s\in [1/2,1].
\end{cases}
 \end{equation*}
For each $\omega$ in $\Omega_{a,b}$, we can find one and 
only one pair of $\widetilde{\omega_1}$, $\widetilde{\omega_2}$ to satisfy that
$\omega=\widetilde{\omega_1}\vee\widetilde{\omega_2}$, 
then for each fixed $T>0$, $\omega  \in \Omega_{a,b}$ with
$a,b \notin \Cut(\omega(1/2))$ and 
$\omega=\widetilde{\omega_1}\vee\widetilde{\omega_2}$,   
$v \in
T_{\omega(1/2)}M$, let

\begin{equation*}
\widehat{X}_s^{T,v}(\omega)=
\begin{cases}
X_{1-2s}^{T/2,\omega(1/2),\omega(0),v}(\widetilde{\omega_1}^{-1}) \ &\text{if}\ s\in [0,1/2],\\
X_{2s-1}^{T/2,\omega(1/2),\omega(1),v}(\widetilde{\omega_2})\ &\text{if}\
s\in [1/2,1]
\end{cases}
\end{equation*}
where $X_s^{T,a,b,v}$ is as  in lemma 2.2 and $\widetilde{\omega_1}^{-1}(s):=\widetilde{\omega_1}(1-s), 
\ 0\leqslant s \leqslant 1$ is the time inverse of the path $\widetilde{\omega_1}$.

For $F\in\F C_b^\infty(\Omega_{a,b})$ and $\omega \in \Omega_{a,b}$, let
\begin{equation}\label{d1.21}
\vGamma^T(F)(\omega)=\begin{cases}\sup_{\{v\in T_{\omega(1/2)}M, |v|=1\}} \;(dF(\widehat{X}_s^{T,v}))^2(\omega), \quad  &\hbox{if } a,b\notin \Cut(\omega(1/2)) 
\\
0, \quad & \hbox{otherwise} .
\end{cases}
\end{equation}
For each smooth cylinder function $F\in\F C_b^\infty(\Omega_{a,b})$, there exists a unique function $\widetilde{F}(\widetilde{\omega_1},\widetilde{\omega_2})$
, defined on $\bigcup_{z \in M}\Omega_{a,z}\times \Omega_{z,b}$, such that $\widetilde{F}(\widetilde{\omega_1},\widetilde{\omega_2})=
F(\widetilde{\omega_1}\vee\widetilde{\omega_2})=F(\omega)$
for each $\omega$ in $\Omega_{a,b}$ with $\omega=\widetilde{\omega_1} \vee \widetilde{\omega_2}$.



\begin{lem}[ \cite{EB1}]\label{L1} For $a,b\in M$, $T>0$, and $r>0$, 
denote $U_{a,b}^r=B_r(a)\cap B_r(b)$ ($B_r(a)$ means the ball with center $a$ and radius
$r$) and
\begin{equation}\label{f2}
\mu_{a,b}^T(dx)=\frac{p_{T/2}(a,x)p_{T/2}(x,b)}{p_T(a,b)}dx.
\end{equation}

\begin{itemize}
\item There exists a positive number $R_1$, such that when $r \in (0,R_1)$,
\begin{equation}\label{lemma-variance-1}
\begin{split}
\Var(F;\p  _{a,b}^T)&\leqslant 2q(T,r)\E  _{a,b}^T[\vGamma   ^T(F)]\\
&+(1+4q(T,r)\rho(T/2,r))\int_{U_{a,b}^r}\big\{\E  _{a,x}^{T/2,1}
[\Var_2(\widetilde{F}(\widetilde{\omega_1},\widetilde{\omega_2});\p  _{x,b}^{T/2})]\\
&+\E  _{x,b}^{T/2,2}
[\Var_1(\widetilde{F}(\widetilde{\omega_1},\widetilde{\omega_2});\p  _{a,x}^{T/2})]\big\}\mu^T_{a,b}(dx) 
\end{split}
\end{equation}
for every smooth cylinder function  $F:\Omega_{a,b}\mapsto \mathbb{R}$ such that 
$F(\omega)=0$ if $\omega(1/2)$ is not in 
$U_{a,b}^r$.  Here  $\vGamma   ^T(F)$, $\rho(T/2,r)$ are defined by
 (\ref{d1.21}) and (\ref{d1.10}) respectively. $\E  ^{T/2, i}$, $\Var_{i}$ indicates  that the corresponding expectation or 
variance is taken with respect to the ith-subpath $\widetilde{\omega}_i$, $i=1,2$, 
\item The constant $q(T,r)$ in above inequality does not depend on $a,b \in M$ and satisfies 
\begin{equation}\label{f8}
\varlimsup_{T\downarrow 0}T^{-1}q(T,r)\leqslant \frac{1+Kr^2}{4}\ \ \forall r \in (0,R_1). 
\end{equation}
for some $K>0$.
\end{itemize}
\end{lem}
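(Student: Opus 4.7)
The backbone of the argument is the law of total variance applied to the midpoint of the loop. Writing $\omega=\widetilde{\omega_1}\vee\widetilde{\omega_2}$ and setting $G(x):=\E^T_{a,b}[F\mid\omega(1/2)=x]$, I would start from
\begin{equation*}
\Var(F;\p^T_{a,b})=\E^T_{a,b}\bigl[\Var(F\mid\omega(1/2))\bigr]+\Var(G;\mu^T_{a,b}),
\end{equation*}
with $\mu^T_{a,b}$ the midpoint law (\ref{f2}). Since $F$ vanishes whenever $\omega(1/2)\notin U^r_{a,b}$, so does $G$. Conditionally on $\omega(1/2)=x$, the two sub-paths are independent with laws $\p^{T/2}_{a,x}$ and $\p^{T/2}_{x,b}$, so the Efron--Stein inequality on this product space gives
\begin{equation*}
\Var(F\mid\omega(1/2)=x)\leqslant \E^{T/2,1}\bigl[\Var_2(\widetilde F;\p^{T/2}_{x,b})\bigr]+\E^{T/2,2}\bigl[\Var_1(\widetilde F;\p^{T/2}_{a,x})\bigr],
\end{equation*}
which will account for the second line of (\ref{lemma-variance-1}).

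For $\Var(G;\mu^T_{a,b})$ the plan is to apply a local Poincar\'e inequality $\Var(G;\mu^T_{a,b})\leqslant q(T,r)\int|\nabla G|^2\,d\mu^T_{a,b}$ for the finite-dimensional density $\mu^T_{a,b}$ restricted to $U^r_{a,b}$, and to control $\nabla G$ with the integration-by-parts formula of Lemma 2.3. Perturbing the midpoint $x$ in a direction $v$ simultaneously shifts the endpoint of the first pinned bridge and the starting point of the second, so the derivative formula is applied to both halves using the vector field $\widehat{X}^{T,v}$, built precisely so that $\widehat{X}^{T,v}_{1/2}=v$ on either side. This yields
\begin{equation*}
dG(x)[v]=\E\bigl[dF(\widehat{X}^{T,v})\bigr]-\Cov\bigl(\delta^{T/2}\widehat{X}^{T,v},F\bigr),
\end{equation*}
where the covariance term further splits into two pieces coming from the two sub-paths. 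Squaring, Jensen bounds the first term by $\E[\vGamma^T(F)]$ (using the definition (\ref{d1.21})), while Cauchy--Schwarz together with the Malliavin--Stroock bound $\Var(\delta^{T/2}\widehat{X})\leqslant\rho(T/2,r)$ reduces the covariance contribution to a multiple of the conditional variance of $F$; re-absorbing the latter via the Efron--Stein decomposition above produces exactly the prefactor $1+4q(T,r)\rho(T/2,r)$ and the leading term $2q(T,r)\E[\vGamma^T(F)]$.

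The main obstacle is the second bullet: extracting an \emph{explicit}, geometry-uniform local Poincar\'e constant $q(T,r)$ for the midpoint density on $U^r_{a,b}$, with the sharp small-time behaviour $\varlimsup_{T\downarrow 0}T^{-1}q(T,r)\leqslant (1+Kr^2)/4$. The approach is to invoke the Varadhan/Molchanov short-time asymptotics for $p_{T/2}(a,\cdot)p_{T/2}(\cdot,b)/p_T(a,b)$: as $T\downarrow0$, $\mu^T_{a,b}$ is asymptotically a Gaussian of variance $T/4$ concentrated at the midpoint of the minimizing geodesic $a\to b$, whose Euclidean Poincar\'e constant is $T/4$, and the $O(Kr^2)$ curvature corrections come from a Bakry--\'Emery-style comparison of $\log\mu^T_{a,b}$ with its Euclidean model on balls of radius $r<R_1\leqslant\mathrm{inj}(M)$. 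The technically delicate point is making these estimates uniform in $a,b\in M$ and in $r\in(0,R_1)$, which is what allows the constant $s_0$ in the eventual weak Poincar\'e inequality (Theorem \ref{MainTheorem}) to be independent of the basepoint.
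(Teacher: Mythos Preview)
The paper does not prove this lemma: it is quoted from Eberle \cite{EB1} and stated without argument, so there is no ``paper's own proof'' to compare against. Your sketch is nonetheless a faithful reconstruction of the argument behind the statement, and the constants you produce (the $2q(T,r)$ in front of $\vGamma^T$ and the $1+4q(T,r)\rho(T/2,r)$ in front of the integrated conditional variances) match the lemma exactly, which is a good indication that the route is the intended one.

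A couple of remarks on details. What you call ``Efron--Stein'' is really the two-factor tensorisation of variance for product measures, which is sharper (an actual inequality, not just an inequality up to a factor $2$) and is what is needed here. More substantively, the local Poincar\'e step deserves care: $G$ is supported in $U^r_{a,b}$ but need not vanish on $\partial U^r_{a,b}$, so the inequality $\Var(G;\mu^T_{a,b})\leqslant q(T,r)\int_{U^r_{a,b}}|\nabla G|^2\,d\mu^T_{a,b}$ must be read as a Neumann-type Poincar\'e on the ball for the (unnormalised) midpoint density, together with the trivial bound $\Var(G;\mu^T_{a,b})\leqslant \mu^T_{a,b}(U^r_{a,b})\,\Var(G;\bar\mu^T_{a,b})$ where $\bar\mu$ is the normalised restriction. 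Eberle's analysis in \cite{EB1} indeed produces such a constant with the stated asymptotic $T^{-1}q(T,r)\to(1+Kr^2)/4$, coming from the fact that as $T\downarrow 0$ the midpoint measure is a perturbed Gaussian of covariance $T/4$ on a geodesic ball; your Bakry--\'Emery heuristic for the $Kr^2$ correction is the right intuition but in the reference it is carried out by direct heat-kernel expansion rather than by an abstract curvature-dimension argument.
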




\section{Some estimate on the variance with small time paremeter}
The following lemma gives a short time asymptotics of the variance. It is crucial for the proof of main result  in this section. For simplicity, 
in the remaining part of the paper, the constant $C$ will change
according to different situation but we will clarify which parameter
such $C$ depends on. 
At first, we state a lemma deriving from lemma 2.4 by some cut-off procedures, 
\begin{lem}
There exists a number $R_1>0$ such that for all $a, b\in M$  with $d(a,b)<r<R_1$, the following holds  for any small number $\eta>0$ and smooth cylindrical function $F$ on $(\Omega_{a,b},P^T_{a,b})$, as soon as $T<T_1(\eta,r)$ for 
a positive number $T_1(\eta,r)$  depending  only on $\eta$ and $r$.
\begin{equation}
\begin{split}\label{l1.6}
&\Var(F;\p  _{a,b}^T)\\
&\leqslant 4q(T,r)\E  _{a,b}^T
\big[\vGamma   ^T(F)I_{\{\omega(1/2) \in U_{a,b}^r \}}\big]\\
&+\big(1+4q(T,r)\rho(T/2,r)\big)\int_{U_{a,b}^r}\Big\{\E  _{a,x}^{T/2,1}
[\Var_2(\widetilde{F}(\widetilde{\omega_1},\widetilde{\omega_2});\p  _{x,b}^{T/2})]\\
&+\E  _{x,b}^{T/2,2}
[\Var_1(\widetilde{F}(\widetilde{\omega_1},\widetilde{\omega_2});\p  _{a,x}^{T/2})]\Big\}\mu^T_{a,b}(dx)
+\Big(6+\frac{128q(T,r)}{\eta^2 r^2}\Big)\text{e}^{-\frac{(1-4\eta)r^2}{2T}}||F||_{\infty}^2.
\end{split}
\end{equation}
Here the measure $\mu_{a,b}^T(dx)$ is the distribution of the mid-point of the Brownian Bridge, given by (\ref{f2}), $U_{a,b}^r:=B_r(a)\cap B_r(b)$, $q(T,r)$ is some constant satisfying
(\ref{lemma-variance-1}), and  $\E  ^{T/2, i}$, $\Var_{i}$ indicates  that the expectation or 
variance is taken with respect to the subpath $\widetilde{\omega}_i$.  
\end{lem}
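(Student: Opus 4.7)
The plan is to reduce to Lemma 2.4 via a smooth mid-point cut-off, absorbing the tail into the $\|F\|_\infty^2$ error by Gaussian concentration of $\omega(1/2)$ under $\p_{a,b}^T$. Pick $\chi:M\to[0,1]$ of the product form $\chi(x)=\phi(d(x,a))\phi(d(x,b))$, where $\phi:[0,\infty)\to[0,1]$ is a smooth bump equal to $1$ on $[0,(1-\eta)r]$, $0$ on $[r,\infty)$, with $|\phi'|\leq 2/(\eta r)$. Set $V:=U_{a,b}^{(1-\eta)r}$. Then $\chi\equiv 1$ on $V$, $\chi\equiv 0$ off $U_{a,b}^r$, and $|\nabla\chi|\leq 4/(\eta r)$. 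Decompose $F=F\chi+F(1-\chi)$ and expand
\[
\Var(F;\p_{a,b}^T)=\Var(F\chi;\p_{a,b}^T)+2\Cov(F\chi,F(1-\chi);\p_{a,b}^T)+\Var(F(1-\chi);\p_{a,b}^T).
\]

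Apply Lemma 2.4 to $F\chi$, which vanishes when $\omega(1/2)\notin U_{a,b}^r$. Since $\chi$ depends only on $\omega(1/2)$, $\widetilde{F\chi}(\widetilde\omega_1,\widetilde\omega_2)=\chi(\widetilde\omega_1(1))\widetilde F(\widetilde\omega_1,\widetilde\omega_2)$, so the inner variances pick up a factor $\chi(x)^2\leq 1$ and we recover exactly the integral in (\ref{l1.6}). For the gradient, the product rule together with $\widehat X_{1/2}^{T,v}(\omega)=v$ gives
\[
\vGamma^T(F\chi)(\omega)\leq 2\chi(\omega(1/2))^2\vGamma^T(F)(\omega)+2F(\omega)^2|\nabla\chi|^2(\omega(1/2)),
\]
so that $2q(T,r)\E_{a,b}^T[\vGamma^T(F\chi)]\leq 4q(T,r)\E_{a,b}^T[\vGamma^T(F)I_{\{\omega(1/2)\in U_{a,b}^r\}}]+(64\,q(T,r)/(\eta^2 r^2))\|F\|_\infty^2\p_{a,b}^T(\{\omega(1/2)\notin V\})$, producing the coefficient $4q(T,r)$.

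For the two remaining pieces, $1-\chi$ vanishes on $V$, so $\Var(F(1-\chi);\p_{a,b}^T)\leq\|F\|_\infty^2\p_{a,b}^T(\{\omega(1/2)\notin V\})$; since $\chi(1-\chi)\leq\tfrac14 I_{\{\omega(1/2)\notin V\}}$ and $|\E_{a,b}^T[F(1-\chi)]|\leq\|F\|_\infty\p_{a,b}^T(\{\omega(1/2)\notin V\})$, we obtain $|\Cov(F\chi,F(1-\chi);\p_{a,b}^T)|\leq\tfrac54\|F\|_\infty^2\p_{a,b}^T(\{\omega(1/2)\notin V\})$. The total remainder is thus at most $(\tfrac72+64\,q(T,r)/(\eta^2 r^2))\|F\|_\infty^2\p_{a,b}^T(\{\omega(1/2)\notin V\})$. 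Since $\tfrac72\leq 6$ and $64\leq 128$, it suffices to show $\p_{a,b}^T(\{\omega(1/2)\notin V\})\leq e^{-(1-4\eta)r^2/(2T)}$ for $T<T_1(\eta,r)$. By Gaussian upper bounds on $p_{T/2}(a,x)p_{T/2}(x,b)$ and a matching lower bound on $p_T(a,b)$, $\mu_{a,b}^T$ concentrates Gaussian-like near the geodesic midpoint $m$ of $a,b$ with variance $T/4$; since $d(m,a)\leq r/2$, any $x\notin V$ satisfies $d(x,m)\geq(1/2-\eta)r$, whence $\p_{a,b}^T(\{\omega(1/2)\notin V\})\leq\exp(-(1-2\eta)^2 r^2/(2T))\leq\exp(-(1-4\eta)r^2/(2T))$ for $T<T_1(\eta,r)$, with $T_1$ chosen to absorb the polynomial $T^{-n/2}$ prefactor. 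The main technical point is the sharp Gaussian exponent $(1-4\eta)$: it pins down the inner radius $(1-\eta)r$, since any smaller choice would miss the target exponent, and it requires uniform-in-$(a,b)$ small-time heat-kernel asymptotics, which in turn determine the threshold $T_1(\eta,r)$.
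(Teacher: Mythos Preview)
Your argument follows the paper's strategy closely: the same product cut-off $\chi$ depending only on $\omega(1/2)$, reduction to Lemma~2.4 for $F\chi$, the product rule for $\vGamma^T$ (using $\widehat X^{T,v}_{1/2}=v$), and a small-time tail bound on $\mu_{a,b}^T$. The bookkeeping differs slightly: the paper uses the cruder inequality $\Var(F;\p_{a,b}^T)\leq \Var(F\chi;\p_{a,b}^T)+3\|F\|_\infty^2\,\p_{a,b}^T(\chi\neq 1)$ rather than your full expansion of $\Var(F\chi+F(1-\chi))$, and this is why the paper lands exactly on $6$ and $128$ (it carries a factor $2$ from $\p_{a,b}^T(\chi\neq 1)\leq 2e^{-(1-4\eta)r^2/(2T)}$), whereas you obtain smaller constants and then note $7/2\leq 6$, $64\leq 128$.

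The one place where your write-up is weaker is the tail estimate. The paper does not pass through the geodesic midpoint $m$ at all; it bounds each event $\{d(a,\omega(1/2))>(1-\eta)r\}$ and $\{d(b,\omega(1/2))>(1-\eta)r\}$ separately by applying Varadhan's asymptotic to $p_T(a,b)$ and $p_{T/2}(a,x)$ while leaving $p_{T/2}(x,b)$ intact as a probability kernel (so that $\int p_{T/2}(x,b)\,dx=1$ handles the integration). Your route instead asserts that $\mu_{a,b}^T$ is ``Gaussian-like near $m$ with variance $T/4$''. That heuristic rests on $d(a,x)^2+d(x,b)^2-\tfrac12 d(a,b)^2\approx 2d(x,m)^2$, which is an exact identity in Euclidean space but not on $M$; as written you have not justified the needed lower bound on $d(a,x)^2+d(x,b)^2$ in terms of $d(x,m)$. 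The paper's direct computation avoids this geometric step entirely and is both shorter and rigorous. If you want to keep your version, the cleanest fix is to drop the midpoint and simply note that $x\notin V$ forces (say) $d(a,x)>(1-\eta)r$, bound $p_{T/2}(a,x)$ and $p_T(a,b)$ by Varadhan, and integrate $p_{T/2}(x,b)\,dx$ to $1$.
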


\noindent 
{\it Remark: } The constants $R_1$ is the same as that in lemma 2.4. It is smaller than the injectivity radius of $M$.

\begin{proof}  {\bf Step 1.}  For a positive $r$ smaller than the injectivity radius of $M$,  $a,b \in M$ with $d(a,b)<r$, define a function $\varPsi_{a,b}:  \Omega_{a,b}\to \R$ by
\begin{equation}\label{f1.0}
\varPsi_{a,b}(\omega):=\varphi(d(a,\omega(1/2)))\cdot\varphi(d(b,\omega(1/2))).
\end{equation}
Here $\phi$ is a a smooth function 
$\varphi:\mathbb{R}^{+}\rightarrow \mathbb{R} $ satisfying
\begin{equation}\label{e2}
\varphi(s)=
\begin{cases}
1,\ &\text{if}\ s\leqslant (1-\eta)r,\\
0,\ &\text{if}\ s\geqslant r
\end{cases} \quad 
\ \text{and}\ |\varphi'|\leqslant \frac{2}{\eta r}.\end{equation}
 Then the function $\varPsi_{a,b}$ is in $\scr{D}(\scr{E}_{a,b}^{T})$, the domain
 of the O-U Dirichlet form, and 
$\sup_{\omega \in \Omega_{a,b}} |\D _0\varPsi_{a,b}(\omega)|_{\H _{\omega}^0}
\leqslant \frac{4}{\eta r}$.  Furthermore we  show below that for all small $\eta>0$ there is constant $T_1(\eta, r)$ such that if $T<T(\eta, r)$,
\begin{equation}\label{l1.2}
\p  _{a,b}^T(\varPsi_{a,b}\neq 1)\leqslant
2\text{e}^{-\frac{(1-4\eta)r^2}{2T}}.
\end{equation}
We begin with estimating  the probability $$\p  _{a,b}^T\Big(d(a,\omega(1/2))>(1-\eta)r\Big)=\mu_{a,b}^T(B_{(1-\eta)r}(a)).$$
By Varadhan's estimate \cite{Va}, $$\text{lim}_{T\downarrow 0}T
\text{log}p_T(a,b)=-\frac{d^2(a,b)}{2}\quad \hbox{uniformly on}~
M\times M.$$ 
Hence for any $\eta>0$ small, there exists a constant $T_1(\eta,r)>0$, such that for
every $0<T<T_1(\eta,r)$,
\begin{equation}\label{c1.0}
-\frac{d^2(a,b)}{2T}-\frac{\eta^2 r^2}{2T}\leqslant
\text{log}p_T(a,b)\leqslant -\frac{d^2(a,b)}{2T}
+\frac{\eta^2 r^2}{2T}.
\end{equation}
 In the calculations that follows we assume that $0<T<T_1(\eta,r)$.  Note that  $d(a,b)<r$, 
\begin{equation*}
\begin{split}
&\p  _{a,b}^T\Big(d(a,\omega(1/2))>(1-\eta)r\Big)
\\
&=\frac{1}{p_T(a,b)}\int_{\{d(a,x)>(1-\eta)r\}}p_{T/2}(a,x)p_{T/2}(x,b)dx\\
&\leqslant
\text{e}^{\frac{d^2(a,b)}{2T}+\frac{\eta^2 r^2}{2T}}\int_{\{d(a,x)>(1-\eta)r\}}
\text{e}^{-\frac{d^2(a,x)}{T}
+\frac{\eta^2 r^2}{T}}p_{T/2}(x,b)dx\\
&\leqslant
\text{e}^{\frac{r^2}{2T}+\frac{\eta^2 r^2}{2T}}\text{e}^{-\frac{(1-\eta)^2 r^2}{T}+\frac{\eta^2 r^2}{T}}
 \leqslant
\text{e}^{-\frac{(1-4\eta)r^2}{2T}}.
\end{split}
\end{equation*}
Similarly, 
$$\p  _{a,b}^T\big(\; d(b,\omega(1/2))>(1-\eta)r \;\big)\leqslant\text{e}^{-\frac{(1-4\eta)r^2}{2T}}.$$ 
Hence
\begin{equation*}
\begin{split}
\p  _{a,b}^T(\varPsi_{a,b}\neq 1)
&\leqslant
\p  _{a,b}^T \left (d(a,\omega(1/2))>(1-\eta)r \right)+\p  _{a,b}^T
\left( d(b,\omega(1/2))>(1-\eta) \right)\\
&\leqslant
2\text{e}^{-\frac{(1-4\eta)r^2}{2T}}.
\end{split}
\end{equation*}

\noindent {\bf Step (b).}  Let $R_1$ be the constant  in Lemma 2.4. Assume that $r<R_1$ and 
  we first  observe that
\begin{equation}
\begin{split}\label{l1.5}
&\Var(F;\p  _{a,b}^T)=\E  _{a,b}^T F^2-(\E  _{a,b}^TF)^2\\
&\leqslant \E  _{a,b}^T (F\varPsi_{a,b})^2+
||F||_{\infty}^2\p  _{a,b}^T(\varPsi_{a,b}\neq 1)-(\E  _{a,b}^T[F-F\varPsi_{a,b}+F\varPsi_{a,b}])^2\\
&\leqslant
\Var(F\varPsi_{a,b};\p  _{a,b}^T)+3||F||_{\infty}^2\p  _{a,b}^T(\varPsi_{a,b}\neq
1)\\
&\leqslant\Var(F\varPsi_{a,b};\p  _{a,b}^T) + 
6\text{e}^{-\frac{(1-4\eta)r^2}{2T}}||F||_{\infty}^2
\end{split}
\end{equation}
Since $\varPsi_{a,b}(\omega)=0$ when 
$\omega(1/2)\notin U^r_{a,b}$,  Lemma 2.4 applies to $F\Psi_{a,b}$ and we have, 
\begin{equation*}
\begin{split}
&\Var(F\varPsi_{a,b};\p  _{a,b}^T)
\leqslant 2q(T,r)\E  _{a,b}^T[\vGamma   ^T(F\varPsi_{a,b})]\\
&+\big(1+4q(T,r)\rho(T/2,r)\big)\int_{U_{a,b}^r}\Big\{\E  _{a,x}^{T/2,1}
[\Var_2(\widetilde{F\varPsi_{a,b}}(\widetilde{\omega_1},\widetilde{\omega_2});\p  _{x,b}^{T/2})]\\
&+\E  _{x,b}^{T/2,2}
[\Var_1(\widetilde{F\varPsi_{a,b}}(\widetilde{\omega_1},\widetilde{\omega_2});
\p  _{a,x}^{T/2})]\Big\}\mu^T_{a,b}(dx),
\end{split}
\end{equation*}
We next deal with the terms $\Var_i(\widetilde{F\varPsi_{a,b}}(\widetilde{\omega_1},\widetilde{\omega_2});\p  _{x,b}^{T/2})$.
Since $\varPsi_{a,b}(\omega)=\varphi(d(a,\omega(1/2)))\cdot\varphi(d(b,\omega(1/2)))$ 
is determined by $\omega(1/2)$, for $i=1,2$.
\begin{equation*}
\begin{split}
\Var_i(\widetilde{F\varPsi_{a,b}}(\widetilde{\omega_1},\widetilde{\omega_2});\p  _{x,b}^{T/2})
&=\varphi(d(a,x))^2\cdot \varphi(d(x,b))^2\cdot\Var_i(\widetilde{F}(\widetilde{\omega_1},\widetilde{\omega_2});\p  _{x,b}^{T/2})\\
&\leqslant \Var_i(\widetilde{F}(\widetilde{\omega_1},\widetilde{\omega_2});\p  _{x,b}^{T/2})
I_{\{x \in U_{a,b}^r\}}.
\end{split}
\end{equation*}
Consequently
\begin{equation}
\begin{split}\label{l1.3}
\Var(F\varPsi_{a,b};\p  _{a,b}^T) 
&\leqslant 2q(T,r)\E  _{a,b}^T[\vGamma   ^T(F\varPsi_{a,b})]\\
&+\big(1+4q(T,r)\rho(T/2,r)\big)\int_{U_{a,b}^r}\Big\{\E  _{a,x}^{T/2,1}
[\Var_2(\widetilde{F}(\widetilde{\omega_1},\widetilde{\omega_2});\p  _{x,b}^{T/2})
I_{\{x \in U_{a,b}^r\}}]\\
&+\E  _{x,b}^{T/2,2}
[\Var_1(\widetilde{F}(\widetilde{\omega_1},\widetilde{\omega_2});\p  _{x,b}^{T/2})
I_{\{x \in U_{a,b}^r\}}]\Big\}\mu^T_{a,b}(dx),
\end{split}
\end{equation}
Let $S_a M:=\{v \in T_a M,\ |v|=1\}$. For each $\omega \in \Omega_{a,b}$, by the
definition of $\vGamma   ^T$ in (\ref{d1.21}), 
\begin{equation}
\begin{split}\label{l1.4}
&\vGamma   ^T(F\varPsi_{a,b})(\omega)=\sup\big\{[d(F\varPsi_{a,b})(\widehat{X}^{T,v})]^2
(\omega); v \in S_{\omega(1/2)}M\big\}\\
&\leqslant 2\sup\big\{[dF(\widehat{X}^{T,v})]^2\varPsi_{a,b}^2;v \in
S_{\omega(1/2)}M\big\}+
2\sup\big\{F^2[d\varPsi_{a,b}(\widehat{X}^{T,v})]^2(\omega);v \in S_{\omega(1/2)}M\big\}\\
&\leqslant 2\vGamma   ^T(F)(\omega)I_{\{\omega(1/2) \in U_{a,b}^r \}}+2||F||_{\infty}^2
\sup\big\{\<v,\nabla_x [\varphi(d(a,x))\cdot \varphi(d(x,b))]\>^2;
v \in S_{x}M\big\}\\
&\leqslant 2\vGamma   ^T(F)(\omega)I_{\{\omega(1/2) \in U_{a,b}^r \}}
+ \frac{32}{\eta^2 r^2}||F||_{\infty}^2I_{\{\varPsi_{a,b}(\omega)\neq
1\}}.
\end{split}
\end{equation}
The required inequality  (\ref{l1.6}) follows from (\ref{l1.5}), (\ref{l1.3}) and (\ref{l1.4}).

\end{proof}

\begin{prp}\label{l1.1}
There is a constant $R_0$ such that for each small $\eta>0$ 
the following holds on $(\Omega_{a,b}, \p_{a,b}^T)$ provided that $d(a,b)<r<R_0$
and  $0<T<T_0(\eta,r)$ for some $T_0(\eta,r)>0$:
\begin{equation*}
\Var(F;\p  _{a,b}^T)\leqslant TC(r)\E  _{a,b}^T
(|\D _0 F|^2_{\H _{\omega}^0})+
C(\eta,r)||F||_{\infty}^2 e^{-\frac{(1-4\eta)r^2}{2T}},\quad F\in\F C_b^\infty(\Omega_{a,b})
\end{equation*}
Here $C(\eta,r)$, $C(r)$ are independent of $T$.
\end{prp}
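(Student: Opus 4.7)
The plan is to iterate Lemma 3.1 recursively along the dyadic hierarchy of midpoint splittings of the path, and to sum the resulting geometric series using the small-time asymptotic (\ref{f8}) for $q(T,r)$.

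First, I establish a pointwise bound $\vGamma^T(F)\leqslant C_1(r)|\D_0F|^2_{\H^0_\omega}$ uniformly in $T\in(0,T_0]$: since $\widehat X^{T,v}$ vanishes at $s=0$ and $s=1$, it belongs to $\H^0_\omega$, and Cauchy--Schwarz applied to $dF(\widehat X^{T,v})=(\D_0F,\widehat X^{T,v})_{\H^0_\omega}$, combined with the boundedness of $|\widehat X^{T,v}|^2_{\H^0_\omega}$ for $|v|=1$ (which comes from the fact that each half of $\widehat X^{T,v}$ is a Jacobi-type field along a minimizing geodesic of length less than $r<R_1$), yields the claim. Next I apply Lemma 3.1 recursively: once to $\Var(F;\p^T_{a,b})$, then to each of the two sub-variances on $\p^{T/2}$-bridges, then to each of the four sub-sub-variances, and so on for $N$ levels. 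This is permissible because on $U^r_{a,b}$ the new pairs of endpoints still lie within distance $r$, so Lemma 3.1 applies at each step. After $N$ iterations, one obtains
\begin{equation*}
\Var(F;\p^T_{a,b})\;\leqslant\;\sum_{k=0}^{N-1}\mathcal A_k\,C_1(r)\,\E^T_{a,b}[\Sigma_k(F)]\;+\;\mathcal R_N(F)\;+\;\|F\|_\infty^2\sum_{k=0}^{N-1}\mathcal B_k,
\end{equation*}
where
\begin{equation*}
\mathcal A_k\;=\;4q(T/2^k,r)\prod_{j=0}^{k-1}\bigl(1+4q(T/2^j,r)\rho(T/2^{j+1},r)\bigr),
\end{equation*}
$\Sigma_k(F)$ collects the $2^k$ sub-path Bismut-gradient contributions at depth $k$, $\mathcal R_N(F)$ is the residual sub-variance at depth $N$, and $\mathcal B_k$ is the cumulative exponential error at depth $k$ inherited from the cut-off in Lemma 3.1.

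By (\ref{f8}), for $T$ small $4q(T/2^j,r)\leqslant (1+Kr^2)T/2^j$, so the product in $\mathcal A_k$ is uniformly bounded by $\exp(CT\sum_j 2^{-j})\leqslant e^{CT}$. Combining the Markov property of the pinned Wiener measure with the orthogonal Haar-type decomposition $\H^0_\omega=\bigoplus_k V_k$, where $V_k$ is spanned by the hat functions supported near dyadic midpoints of level $k$, one identifies $\E^T_{a,b}[\Sigma_k(F)]$ as essentially capturing the squared $V_k$-component $|\Pi_{V_k}\D_0F|^2_{\H^0_\omega}$ of the full Bismut gradient, and Parseval's identity gives
\begin{equation*}
\sum_{k\geqslant 0}(T/2^k)\E^T_{a,b}[\Sigma_k(F)]\;\leqslant\;T\sum_{k\geqslant 0}\E^T_{a,b}[|\Pi_{V_k}\D_0F|^2_{\H^0_\omega}]\;=\;T\,\E^T_{a,b}[|\D_0F|^2_{\H^0_\omega}].
\end{equation*}
Hence the total gradient sum is bounded by $TC(r)\,\E^T_{a,b}[|\D_0F|^2_{\H^0_\omega}]$. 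The exponential errors satisfy $\mathcal B_k\lesssim e^{-(1-4\eta)r^2\,2^{k-1}/T}$, decaying doubly geometrically, so $\sum_k\mathcal B_k\leqslant C(\eta,r)\,e^{-(1-4\eta)r^2/(2T)}$. Finally, $\mathcal R_N(F)\to 0$ as $N\to\infty$ for a cylinder function $F=f(\omega(s_1),\dots,\omega(s_n))$: once $N$ is large enough that the dyadic partition separates the $s_i$, the sub-paths containing no $s_i$ contribute zero variance, while the at most $n$ exceptional sub-paths produce variances of order $T/2^N$ that are absorbed by the gradient term.

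The main obstacle is the delicate accounting of the gradient series: a naive Pythagoras-like inequality $|\D_0^{(1)}F|^2_{\H^0_{\widetilde\omega_1}}+|\D_0^{(2)}F|^2_{\H^0_{\widetilde\omega_2}}\leqslant 2|\D_0F|^2_{\H^0_\omega}$, where the factor $2$ arises from the $\sqrt 2$-scaling of the sub-path inclusions $\iota_i:\H^0_{\widetilde\omega_i}\hookrightarrow\H^0_\omega$ under the reparameterization $s\mapsto 2s$, would cause an $\mathcal O(NT)$ divergence if iterated naively. The resolution is to exploit that $\vGamma^{T/2^k}$ at level $k$ effectively measures only the level-$k$ component of $\D_0F$ in the Haar decomposition, so contributions from different levels are nearly orthogonal and their weighted sum remains of order $T$ via Parseval, independently of the depth $N$.
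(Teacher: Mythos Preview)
Your overall strategy coincides with the paper's: iterate Lemma~3.1 along the dyadic hierarchy, control the accumulated $\vGamma$-terms via the Schauder basis of $\H^0_\omega$, and sum the exponential cut-off errors geometrically. You also correctly isolate the main obstacle, namely that the naive bound $\vGamma^{T/2^k,j}(F^{[k]})\leqslant C_1(r)|\D_{0,(j)}F^{[k]}|^2$ together with $\sum_j|\D_{0,(j)}F^{[k]}|^2\leqslant 2^k|\D_0F|^2$ yields $\sum_k q(T/2^k,r)\cdot 2^k\sim mT$, which diverges as $m\to\infty$.

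The gap is in your resolution of this obstacle. You assert that $\vGamma^{T/2^k,j}$ ``effectively measures only the level-$k$ component $|\Pi_{V_k}\D_0F|^2$'' and then invoke Parseval. This identification is not correct: the relevant pointwise bound, quoted in the paper from \cite[Lemma~4.3]{EB1}, is
\[
\vGamma^T(F)\,I_{\{\omega(1/2)\in U^r_{a,b}\}}\;\leqslant\; C(r)\Lambda_{0,1}(F)\;+\;\sum_{l\geqslant 0}C(r)(T+2^{-l})\sum_{n=1}^{2^l}\Lambda_{l,n}(F),
\]
so $\vGamma^T$ is controlled by a \emph{weighted} sum over \emph{all} Schauder levels, the higher ones carrying small but nonzero coefficients $(T+2^{-l})$. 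Applying this at depth $k$ and using the rescaling identity $\Lambda^{k,j}_{l,n}(F^{[k]})=2^k\Lambda_{l+k,(j-1)2^l+n}(F)$, the contribution at iteration depth $k$ involves every Schauder level $\geqslant k$, not just level $k$. The sums over the iteration index $k$ and the internal Schauder index $l$ must then be interchanged and bounded termwise; this is exactly the content of the paper's Lemma~3.3, where one shows that the resulting coefficient $g(k,T,r)$ of $\sum_j\Lambda_{k,j}(F)$ satisfies $\sup_k g(k,T,r)\leqslant TC(r)$, after which Parseval does apply. Your shortcut treats the $\Sigma_k$ as orthogonal projections onto $V_k$, which they are not, so the Parseval step as written is unjustified.

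Two smaller remarks. First, your opening Cauchy--Schwarz bound $\vGamma^T(F)\leqslant C_1(r)|\D_0F|^2$ is true (it follows from the refined bound above by crudely estimating $(T+2^{-l})\leqslant C$), but as you yourself note it is useless for the summation; it plays no role in the actual argument and you may as well omit it. Second, the paper avoids the limiting argument for $\mathcal R_N$ altogether by reducing at the outset to cylinder functions on a fixed dyadic partition of level $m$; after $m$ iterations the conditional variances vanish identically.
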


\begin{proof} 
 By approximation, it suffices to show the 
inequality holds for all smooth cylindrical functions on dyadic partitions, e.g. of the form
\begin{equation}\label{e1}
F(\omega)=f\Big(\omega\Big(\frac{1}{2^m}\Big),
\omega\Big(\frac{2}{2^m}\Big)
,\dots\omega\Big(\frac{2^m-1}{2^m}\Big)\Big),\ f \in
C^{\infty}(M^{2^m}), \  m\in {\mathbb N^{+}}.
\end{equation}
For any $\omega\in \Omega_{a,b}$, let
\begin{equation*}
\omega_i(s)=\omega\Big(\frac{i-1+s}{2^k}\Big), \ \ s\in [0,1],
\ k\in \mathbb{N}^{+},\ 1\leqslant i \leqslant 2^k
\end{equation*}
For simplicity, we did not reflect the  index $k$ in the definition of the new path $\omega_i$.
For each smooth cylinder function $F$ and positive 
integer $k$
we define a unique function $F^{[k]}$ of $2^k$  sub-paths. It is defined on
$\bigcup_{z=(x_1,\dots x_{2^k-1})\in M^{2^k-1}}\prod_{i=1}^{2^k}\Omega_{x_{i-1},x_i}$
(here $x_0=a$ and $x_{2^k}=b$), such that for each $\omega \in \Omega_{a,b}$
$$F^{[k]}(\omega_1,\dots\omega_{2^k})=F(\omega).$$
In fact,  
$\int F^{[k]}(\omega_1,\dots\omega_{2^k}) \prod_{i=1}^{2^k} 
\p  _{x_{i-1},x_i}^{T/2^k}(d\omega_i)$
is a 
smooth version of the conditional expectation $\E  _{a,b}^{T}[F
|\omega(1/2^k)=x_1,\dots\omega(1-1/2^k)=x_{2^k-1}]$
and 
$F^{[1]}$ is the same as $\widetilde{F}$  in Lemma 2.4 and Lemma 3.1.  

For $N\geqslant 1$ and $T>0$ we define
the probability measure
$\mu_{a,b}^{N,T}$ in $M^{N-1}$ as,  
\begin{equation}\label{d1.2}
\mu_{a,b}^{N,T}(\d x):=
\frac{p_{\frac{T}{N}}(b,x_{N-1})p_{\frac{T}{N}}(x_{N-1},x_{N-2})\dots
p_{\frac{T}{N}}(x_1,a)}{p_T(a,b)}\d x_{N-1}\dots\d x_1.
\end{equation}

Fix a number $0<r<R_1$ for $R_1$ as  in Lemma 3.1,  $\eta>0$ and a positive number $T<T_1(\eta,r)$.  For the variance terms for $\widetilde{F}$ as a function of 
any of the two subpaths $\widetilde{\omega_1}, \widetilde{\omega_2}$ on the right 
side of inequality (\ref{l1.6}), we  apply  (\ref{l1.6}) from lemma 3.1, on each sub-path while keeping the other fixed, to obtain an  estimate on the variance of $F$ in terms of 
 the variances and the operation $\vGamma ^{T/2}$ for $F^{[2]}$ as a function of any of the four subpaths (note that $x \in U_{a,b}^r$, so 
we can use lemma 3.1 here).  Repeat with this procedure by mid-dividing the path and applying  (\ref{l1.6}).
The variance terms will finally vanish  after  a repetition of $m$ times for the smooth cylinder function of  type (\ref{e1}), and 
we have,

\begin{equation}
\begin{split}\label{c1.7}
&\Var(F;\p  _{a,b}^T)
\leqslant 4\sum_{k=0}^{m-1}G(k,T,r)q(T/2^k,r)\times\\
&\Big(
\sum_{j=1}^{2^k}\int_{U_j}\bigg\{\E  _{x_{j-1},x_j}^{T/2^k,j}
\big[\vGamma   ^{T/2^k,j}(F^{[k]})(\omega_1,\dots \omega_{2^k})
I_{\{\omega_j(1/2) \in U_{x_{j-1},x_j}^r \}}\big]\\
&\times\prod_{i\neq j}\p  _{x_{i-1},x_i}^{T/2^k}(d\omega_i)\bigg\}
\mu_{a,b}^{2^k,T}(dx)\Big)\\
&
+\sum_{k=0}^{m-1}G(k,T,r)\Big(6+\frac{128q(T/2^k,r)}{\eta^2 r^2}\Big)
2^k\text{e}^{-\frac{2^{k}(1-4\eta)r^2}{2T}}||F||_{\infty}^2
\end{split}
\end{equation}
where $G(0,T,r)=1$, 
$G(k,T,r)=\prod_{i=1}^k(1+4q(T/2^{i-1},r)\rho(T/2^i,r))$ for
each $k>0$,   
$U_j=\{x=(x_1, \dots, x_{2^k-1})\in M^{{2^k-1}} : d(x_{j-1}, x_j)<r\}$ for $j=1,2 \dots, 2^k$ ($x_0=a$ and $x_{2^k}=b$). We denote by
$\E  ^{T/2^k,j}_{x_{j-1},x_j}$ and   $\vGamma   ^{T/2^k,j}(F^{[k]})$
 taking the corresponding expectation and the operation $\vGamma   ^{T/2^k}$ (defined in (\ref{d1.21})) with respect to the $j$th sub-path for 
function $F^{[k]}$.

By (5.8) in the proof of lemma 5.1 in \cite{EB1}  if  $T$ is small enough, $$\sup_{k \in \mathbb{N}} G(k,T,r)<C(r).$$
By this and Lemma 3.3 below we can find a positive  number
$R_0<R_1$, such that for each $0<r<R_0$, there is a $T_2(r)>0$, 
when  $T<T_2(r)$ the following holds for all  positive integer $m$:
\begin{equation}
\begin{split}\label{c1.8}
&\sum_{k=0}^{m-1}G(k,T,r)q(T/2^k,r)\cdot
\Big(\sum_{j=1}^{2^k}\int_{U_j}\bigg\{\E  _{x_{j-1},x_j}^{T/2^k,j}
\big[\vGamma   ^{T/2^k,j}(F^{[k]})(\omega_1,\dots \omega_{2^k})
I_{\{\omega_j(1/2)\in U_{x_{j-1},x_j}^r\}}\big]\\
&\times\prod_{i\neq j}\p  _{x_{i-1},x_i}^{T/2^k}(d\omega_i)\bigg\}
\mu_{a,b}^{2^k,T}(dx)\Big)\\
&\leqslant TC(r)\E  _{a,b}^T|\D _0F|_{\H _{\omega}^0}^2
\end{split}
\end{equation}
Note that by part 2 of  Lemma 2.4 there is $T_0(\eta,r) <\min (T_2(r), T(\eta, r))$ such that if  $T<T_0(\eta,r)$, then 
$|q(T,r)|\leqslant C(r)$ for some constant $C$ depending only on $r$. Using this bound and the bound
on $\sup_k G(k,T,r)$ we see that for $T<T_0$, 
$$\sup_{k\in \mathbb{N}}G(k,T,r)\left(6+\frac{128q(T/2^k,r)}{\eta^2 r^2}\right)\leqslant C(r,\eta)$$
and 
\begin{equation}
\begin{split}\label{c1.9}
\sum_{k=0}^{m-1}G(k,T,r)\Big(6+\frac{128q(T/2^k,r)}{\eta^2 r^2}\Big)
2^k\text{e}^{-\frac{2^{k}(1-4\eta)r^2}{2T}} &\leqslant
C(r,\eta)\sum_{k=0}^{\infty}2^k\text{e}^{-\frac{2^k(1-4\eta)r^2}{2T}}\\
&\leqslant
C(r,\eta)\text{e}^{-\frac{(1-4\eta)r^2}{2T}}
\end{split}
\end{equation}
We conclude the proof from  (\ref{c1.7}),  (\ref{c1.8}) and  (\ref{c1.9}).
\end{proof}

\begin{lem} Let
$U_j=\{x=(x_1, \dots, x_{2^k-1})\in M^{{2^k-1}}: d(x_{j-1}, x_j)<r,\}$ for 
$j=1,2\dots 2^k$($x_0=a$ and $x_{2^k}=b$). 
We can find a $R_2>0$, for each $0<r<R_2$, there is a $T(r)>0$, 
when $T<T(r)$, we have
\begin{equation*}
\begin{split}\label{p1.1}
&\sum_{k=0}^{m-1}q(T/2^k,r)\cdot \Big(\sum_{j=1}^{2^k}
\int_{U_j}\bigg\{\E  _{x_{j-1},x_j}^{T/2^k,j}
\big[\vGamma   ^{T/2^k,j}(F^{[k]})(\omega_1,\dots \omega_{2^k})
I_{\{\omega_j(1/2)\in U_{x_{j-1},x_j}^r\}}\big]\\
&\times \prod_{i\neq j}\p  _{x_{i-1},x_i}^{T/2^k}(d\omega_i)\bigg\}
\mu_{a,b}^{2^k,T}(dx)\Big)\\
&\leqslant TC(r)\E  _{a,b}^T|\D _0F|_{\H _{\omega}^0}^2
\end{split}
\end{equation*}
Here $C(r)$ is independant with $T$.
\end{lem}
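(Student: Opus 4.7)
The plan is to first apply the small-time asymptotic (\ref{f8}) of $q$ from Lemma 2.4: since $T/2^k \le T$, for $T$ less than some $T(r)>0$ one has $q(T/2^k,r) \le C(r)\cdot T/2^k$ uniformly in $k\ge 0$. Pulling this factor out of the $k$-sum, the left-hand side of the asserted inequality is bounded by $T\,C(r)$ times
\[
\sum_{k=0}^{m-1}2^{-k}\sum_{j=1}^{2^k}\int_{U_j}\E^{T/2^k,j}_{x_{j-1},x_j}\!\left[\vGamma^{T/2^k,j}(F^{[k]})\,I_{\{\omega_j(1/2)\in U^r_{x_{j-1},x_j}\}}\right]\prod_{i\neq j}\p^{T/2^k}_{x_{i-1},x_i}(d\omega_i)\,\mu^{2^k,T}_{a,b}(dx),
\]
and it remains to show this sum is bounded by $\E^T_{a,b}|\D_0 F|^2_{\H^0_\omega}$ (at the cost of possibly enlarging $C(r)$).

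The key tool is the Markov decomposition of the pinned Wiener measure along the dyadic partition $\{i/2^k\}_{i=0}^{2^k}$: for every bounded measurable $G$ on $\Omega_{a,b}$,
\[
\int_{M^{2^k-1}}\!\int G(\omega_1\vee\cdots\vee\omega_{2^k})\,\prod_{i=1}^{2^k}\p^{T/2^k}_{x_{i-1},x_i}(d\omega_i)\,\mu^{2^k,T}_{a,b}(dx)=\E^T_{a,b}[G].
\]
Applied to each summand with $G=\vGamma^{T/2^k,j}(F^{[k]})\,I$, viewed as a measurable function of $\omega\in\Omega_{a,b}$ via the identification $\omega=\omega_1\vee\cdots\vee\omega_{2^k}$, this collapses the nested integral into a single expectation $\E^T_{a,b}\bigl[\vGamma^{T/2^k,j}(F^{[k]})\,I\bigr]$. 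It therefore suffices to establish the pointwise dyadic Bessel inequality
\[
\sum_{k=0}^{\infty}2^{-k}\sum_{j=1}^{2^k}\vGamma^{T/2^k,j}(F^{[k]})(\omega)\le C(r)\,|\D_0 F(\omega)|^2_{\H^0_\omega}.
\]

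This Bessel-type inequality is the heart of the argument. By construction $\widehat{X}^{T/2^k,j,v}$ is a Bismut tangent vector in $\H^0_\omega$ supported on the dyadic sub-interval $[(j-1)/2^k,j/2^k]$, with unit initial value $v$ at the midpoint $s_{k,j}=(2j-1)/2^{k+1}$ and vanishing at both endpoints. In the flat model $M=\R^n$ these are precisely the height-$1$ Schauder tents $\tilde\phi_{k,j}$ tensored with $v$; a direct computation gives $\|\tilde\phi_{k,j}\|^2_{H^1_0}=4\cdot 2^k$ together with mutual orthogonality across both scale $k$ and position $j$. The identity $dF(\widehat{X})=(\D_0 F,\widehat{X})_{\H^0_\omega}$ together with Parseval then yields the bound with $C=4$, and replacing $\sup_{|v|=1}$ by a sum over an orthonormal frame of $T_{\omega(s_{k,j})}M$ costs only a harmless factor $\dim M$.

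The main obstacle is transferring this Schauder-basis mechanism to the curved pinned setting: the damped parallel transport and the Jacobi field along the geodesic joining $\omega((j-1)/2^k)$ and $\omega(j/2^k)$ appearing in the definition of $\widehat{X}^{T/2^k,j,v}$ distort both orthogonality and the norm identity. These distortions should, however, be controllable uniformly in $k$ on the set $\{d(x_{j-1},x_j)<r\}$ carried by the indicator in each summand, using the uniform estimates on $\rho(T/2^{k+1},r)$ from Lemma 2.2 together with standard geometric control of the exponential map on balls of radius $r<R_2$. They should contribute only an overall constant $C(r)$ independent of $k$; combined with the $q$-asymptotic and the Markov recomposition this yields the desired inequality.
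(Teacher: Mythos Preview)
Your overall architecture---pull out $q(T/2^k,r)\le C(r)T/2^k$, collapse the iterated integral to $\E^T_{a,b}$ via the Markov decomposition, and reduce everything to a pointwise dyadic Bessel inequality---is exactly the route the paper takes. The gap is in your justification of the Bessel step.

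In the flat model your Parseval computation is correct: $\widehat X^{k,j,v}$ is $2\cdot 2^{k/2}$ times the orthonormal Schauder element $h_{k,j}\otimes v$, so $\vGamma^{k,j}\le 4\cdot 2^k\,\Lambda_{k,j}(F)$ and the $2^{-k}$ prefactor makes the sum telescope to $4|\D_0F|^2$. But in the curved case $\widehat X^{T/2^k,j,v}$ is \emph{not} a multiple of a single Schauder vector: the damped parallel transport and the Jacobi field leak into \emph{all} finer scales $l>0$ inside the same dyadic block. Orthogonality across position $j$ at fixed $k$ survives (disjoint supports), but orthogonality across scales does not, and a uniform-in-$k$ bound on the distortion is not what is needed. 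What is actually required---and what the paper imports as \cite[Lemma~4.3]{EB1}---is the quantitative leakage estimate
\[
\vGamma^{T}(F)\,I_{\{\omega(1/2)\in U^r_{a,b}\}}\le C(r)\Lambda_{0,1}(F)+C(r)\sum_{l\ge 0}(T+2^{-l})\sum_{n=1}^{2^l}\Lambda_{l,n}(F),
\]
i.e.\ the scale-$l$ component of $\widehat X^{T,v}$ has size $O(\sqrt{T+2^{-l}})$. The geometric decay $2^{-l}$ is essential: if the leakage were merely $O(1)$ per scale, then after rescaling via $\Lambda^{k,j}_{l,n}(F^{[k]})=2^k\Lambda_{k+l,(j-1)2^l+n}(F)$ and summing over $k$, the coefficient of $\sum_m\Lambda_{k',m}(F)$ would be $\sum_{k\le k'}1=k'+1$, and the Bessel sum would diverge. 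With the $2^{-l}$ decay, that coefficient becomes $\sum_{k\le k'}(T2^{-k}+2^{-(k'-k)})\le 2T+2$, which is exactly what makes your pointwise inequality true.

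Your appeal to $\rho(T/2^{k+1},r)$ from Lemma~2.2 is misplaced: $\rho$ bounds the \emph{variance} of the divergence $\delta^T X$ under $\p^T_{a,b}$ and plays no role in comparing $\widehat X$ pointwise to Schauder tents. The missing ingredient is purely geometric/analytic control of how fast the damped Jacobi field deviates from the linear tent as one refines the dyadic scale, and that is precisely the content of \cite[Lemma~4.3]{EB1}. Once you invoke it, your argument and the paper's coincide.
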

\begin{proof}
Following the notation from \cite{EB1}, let $\{h_{k,j};\ k\geqslant 0, 1\leqslant j \leqslant 2^k\}$
be the orthonormal basis of $H_0^{1,2}([0,1];\mathbb{R})$ consisting of
Schauder functions, i.e. $h_{0,1}(s)=s\wedge(1-s)$,
\begin{equation*}
\begin{cases}
h_{k,j}(s)=2^{-k/2}h_{0,1}(2^k s-(j-1)) \ &\text{if}\ s\in [(j-1)2^{-k},j2^{-k}],\\
h_{k,j}(s)=0\ &\text{otherwise}\  
\end{cases}
\end{equation*}
for $k\geqslant 1$ and $1\leqslant
j\leqslant 2^k$. 
Let $d=dim(M)$. We  choose  $\{e_i, 1\leqslant i\leqslant d\}$, a family of measurable vector
fields  on $M$ with  $\{e_i(z);
1\leqslant i \leqslant d\}$ an orthonomal basis on $T_z M$ for
every $z \in M$. These give rise to an orthonormal basis of $\H_{\omega}^0$:
\begin{equation*}
Z_s^{k,j,i}(\omega)=h_{k,j}(s)\parals_{1/2,s}(\omega)e_i(\omega(1/2)),\
s\in [0,1], \ k\geqslant 0,\ 1\leqslant j \leqslant 2^k,\
1\leqslant i\leqslant d.
\end{equation*}
For each $F\in\F C_b^\infty(\Omega_{a,b})$, let
\begin{equation*}
\Lambda_{k,j}(F)(\omega)=\sum_{i=1}^d[dF(Z^{k,j,i})]^2=\sum_{i=1}^d(\D _0F,Z^{k,j,i})^2_{\H _{\omega}^0}.
\end{equation*}
Then we have
\begin{equation*}
|\D _0F(\omega)|^2_{\H _{\omega}^0}=\sum_{k=0}^{\infty}\sum_{j=1}^{2^k}
\Lambda_{k,j}(F)(\omega),\quad\omega \in \Omega_{a,b}.
\end{equation*}
By \cite[lemma 4.3]{EB1}, there exist
constants $R_2>0$, such that for each $r\in (0,R_2)$, there is a  $\widetilde{T}(r)>0$,
when $T<\widetilde{T}(r)$, for each smooth cylinder function $F$ 
and $\omega \in \Omega_{a,b}$, we have,  
\begin{equation*}
\vGamma   ^T(F)(\omega)I_{\{\omega(1/2) \in U_{a,b}^r \}}
\leqslant C(r)\Lambda_{0,1}(F)(\omega)+
\sum_{l=0}^{\infty}C(r)(T+2^{-l})\sum_{n=1}^{2^l}\Lambda_{l,n}(F)(\omega).
\end{equation*}
Thus, we obtain
\begin{equation}\label{e0.2}
\begin{split}
&\vGamma   ^{T/2^k,j}(F^{[k]})(\omega_1,\dots\omega_{2^k})
I_{\{\omega_j(1/2) \in U_{x_{j-1},x_j}^r \}}\\
&\leqslant C(r)\Lambda_{0,1}^{k,j}(F^{[k]})(\omega_1,\dots\omega_{2^k})
+\sum_{l=0}^{\infty}C(r)(T/2^k+2^{-l})\sum_{n=1}^{2^l}\Lambda_{l,n}^{k,j}
(F^{[k]})(\omega_1,\dots\omega_{2^k}).
\end{split}
\end{equation}
Here $\Lambda_{l,n}^{k,j} (F^{[k]})$ means the corresponding
operation $\Lambda_{l,n}$ is taken with respect to the $j$-th subpath for
function $F^{[k]}$. Since
\begin{equation}
\begin{split}\label{c1.11}
\Lambda_{l,n}^{k,j} (F^{[k]})(\omega_1,\dots\omega_{2^k})&=
\sum_{i=1}^d[dF^{[k]}(Z^{l,n,i})]^2(\omega_1,\dots,\omega_{j-1},\bullet,\omega_{j+1},\dots\omega_{2^k})\\
&=\sum_{i=1}^d 2^k [dF(Z^{l+k,(j-1)2^l+n,i})]^2(\omega)
=2^k\Lambda_{l+k,(j-1)2^l+n}(F)(\omega),
\end{split}
\end{equation}
the second equality above is due to the defintion of $Z^{l,n,i}$ and
some time rescaling procedure. Then by (\ref{e0.2}) and (\ref{c1.11}) we obtain,  

\begin{equation}\label{c.12}
\begin{split}
&\sum_{k=0}^{m-1}q(T/2^k,r)\sum_{j=1}^{2^k}\vGamma   ^{T/2^k,j}(F^{[k]})(\omega_1,\dots\omega_{2^k})
I_{\{\omega_j(1/2) \in U_{x_{j-1},x_j}^r \}}\\
&\leqslant\sum_{k=0}^{m-1}\big\{C(r)q(T/2^k,r)2^k+\sum_{l=0}^kC(r)(2^{k-2l}+T)
q(T/2^{k-l},r)\big\}\sum_{j=1}^{2^k}\Lambda_{k,j}(F)(\omega).
\end{split}
\end{equation}
Let $g(k,T,r):=C(r)q(T/2^k,r)2^k+\sum_{l=0}^kC(r)(2^{k-2l}+T)
q(T/2^{k-l},r)$, by the estimate of $q(T,r)$ in lemma 2.4, we can find
a $T(r)<\widetilde{T}(r)$, such that for each $T< T(r)$, 
$\sup_{k \in \mathbb{N}}g(k,T,r)\leqslant TC(r)$, where $C(r)$ is a constant independant of $T$ and $k$.

So by (\ref{c.12}), for $T<T(r)$, we have,

\begin{equation*}
\begin{split}
&\sum_{k=0}^{m-1}q(T/2^k,r)\cdot\Big(\sum_{j=1}^{2^k}
\int_{U_j}\bigg\{\E  _{x_{j-1},x_j}^{T/2^k,j}
\big[\vGamma   ^{T/2^k,j}(F^{[k]})(\omega_1,\dots \omega_{2^k})
\I_{\{\omega_j(1/2) \in U_{x_{j-1},x_j}^r\}} \big]\\
&\times\prod_{i\neq j}\p  _{x_{i-1},x_i}^{T/2^k}(d\omega_i)\bigg\}
\mu_{a,b}^{2^k,T}(dx)\Big)\\
&\leqslant\sum_{k=0}^{m-1}g(k,T,r)\sum_{j=1}^{2^k}\E  _{a,b}^T[\Lambda_{k,j}(F)]\\
&\leqslant TC(r)\E  _{a,b}^T|\D _0F|_{\H _{\omega}^0}^2
\end{split}
\end{equation*}
\end{proof}

\section{An estimate over discriticized loop spaces}
For each $r\in \mathbb{R}^{+}$ and integer $N\geqslant 1$, define the subset
$U_{a,b}^{r,N}$ of $M^{N-1}$ as,  
\begin{equation}
U_{a,b}^{r,N}:=\big\{(x_1,\dots x_{N-1})\in M^{N-1};\
d(x_{i-1},x_{i})<r,\ 1\leqslant i \leqslant N,\ x_0=a,\ x_{N}=b\big\}.
\end{equation} And recall that 
$\mu_{a,b}^{N,T}$ is the probability measure on $M^{N-1}$ defined 
in (\ref{d1.2}), which is also the joint distribution 
of $\big(\omega(i/N), \ i=1,2,\dots N-1\big)$ under $\p  _{a,b}^{T}$.
We have the following weak  estimates of the variance with respect to $\mu_{a,b}^{N,1}$.
\begin{prp}\label{p2.1}
Let M be a compact simply connected manifold with strict positive Ricci curvature. For any $\eta>0$ small enough, 
$\ 0<r<R_0$, there exists an integer $N_1(\eta,r)>0$, such that for 
sufficiently big integer $l$ ($l>N_1(\eta,r)$ for some constant 
$N_1(\eta,r)$ which only depends on $\eta$ and $r$), there exists an integer $N(l,\eta,r)$, if $N>N(l,\eta,r)$ 
and $f\in C^\infty(M^{N-1})$ with $\supp(f)\subset \overline{U}_{a,a}^{r,N}$, then we have, 
\begin{equation*}
\begin{split}
&\Var(f;\mu_{a,a}^{N,1})\\
&\leqslant
C(l,r)N^{C(l,r)}\e^{2N\eta r^2}
\sum_{i=1}^{N-1}\int_{M^{N-1}}|\nabla_i
f|^2
\d\mu_{a,a}^{N,1}
+C(l,\eta,r)N^{C(l,r)}
\e^{-\frac{N(1-8\eta)r^2}{2}}||f||_{\infty}^2.
\end{split}
\end{equation*}
\end{prp}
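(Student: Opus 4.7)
The strategy is to reduce the $(N-1)$-dimensional variance on $\mu_{a,a}^{N,1}$ to variances over short-time pinned Wiener bridges (handled by Proposition \ref{l1.1}) plus a recursion in the dimension. Identify $f$ with the smooth cylindrical function $F(\omega)=f(\omega(1/N),\ldots,\omega((N-1)/N))$ on $\Omega_{a,a}$, so that $\Var(f;\mu_{a,a}^{N,1})=\Var(F;\p_{a,a}^1)$. Fix a block length $l$ with $l\mid N$ (the general case needing only a minor adjustment), set $M=N/l$, and let $\mathcal{G}$ denote the $\sigma$-algebra generated by the block endpoints $\{x_{kl}:k=1,\ldots,M-1\}$. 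By the Markov property of $\p_{a,a}^1$, conditional on $\mathcal{G}$ the restrictions of $\omega$ to distinct blocks are independent, with block-$k$ law equal to the pinned Wiener bridge $\p_{x_{(k-1)l},x_{kl}}^{l/N}$ of small time parameter $T=l/N$.

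From $\Var(F;\p_{a,a}^1)=\E[\Var(F\mid\mathcal{G})]+\Var(\E[F\mid\mathcal{G}];\mu_{a,a}^{M,1})$, the inner term splits as a sum of $M$ block-wise conditional variances. For $N$ large enough that $l/N<T_0(\eta,r)$, Proposition \ref{l1.1} applies on each block (after a cut-off analogous to the one built into Lemma 3.1, enforcing the hypothesis $d(\cdot,\cdot)<r$ via the support assumption $\supp f\subset\overline{U}_{a,a}^{r,N}$). On block $k$ this yields a bound of the form $C(r)(l/N)\sum_{i\in\text{block }k}|\nabla_i f|^2 + C(\eta,r)\|f\|_\infty^2 e^{-(1-4\eta)r^2 N/(2l)}$. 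Summing over the $M$ blocks gives a leading Dirichlet-form contribution $C(r)\sum_i|\nabla_i f|^2$ plus a total exponential error $(N/l)C(\eta,r)e^{-(1-4\eta)r^2 N/(2l)}\|f\|_\infty^2$, which is ultimately absorbed into the final $e^{-N(1-8\eta)r^2/2}$ bound.

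The outer variance $\Var(\E[F\mid\mathcal{G}];\mu_{a,a}^{M,1})$ has the same structure as the original problem, but with $N$ replaced by $M=N/l$, so one iterates the decomposition $O(\log_l N)$ times. At each level, the gradient of the new conditional expectation $\phi=\E[F\mid\mathcal{G}]$ with respect to a block-endpoint is related to the original gradients $\{\nabla_{kl}f\}$ through the integration-by-parts formula of Lemma 2.3; the resulting covariance term $\Cov(\delta^{T}X,F)$ is controlled by the uniform bound $\sup_{T<T_0}\rho(T,r)<\infty$ of Lemma 2.2 combined with the variance bound of Proposition \ref{l1.1} on the very same block. This closes the recursion on $\sum_i|\nabla_i f|^2$, at the price of a bounded multiplicative loss per level.

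The principal difficulty, and where the bulk of the technical work lies, is bookkeeping: tracking how the per-level factors accumulate so as to produce precisely the constant $C(l,r)N^{C(l,r)}e^{2N\eta r^2}$ in front of the Dirichlet term and the error $C(l,\eta,r)N^{C(l,r)}e^{-N(1-8\eta)r^2/2}$ in front of $\|f\|_\infty^2$. The factor $e^{2N\eta r^2}$ arises from summing $N/l$ per-block cut-off losses of size $\sim e^{2\eta r^2}$; the polynomial $N^{C(l,r)}$ compounds the integration-by-parts factors and the gradient losses across $\log_l N$ levels; and the exponent $1-8\eta$ (rather than $1-4\eta$) absorbs the accumulated $\eta r^2$ losses from the cut-offs at each level. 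Choosing $l$ large, $N$ larger than thresholds $N_1(\eta,r)$ and $N(l,\eta,r)$, and $\eta$ small enough is what forces all these constraints to be simultaneously compatible.
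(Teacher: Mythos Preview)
Your recursion has a concrete gap in the exponential bookkeeping. Applying Proposition~\ref{l1.1} on a block of time $T=l/N$ gives an error $C(\eta,r)\e^{-(1-4\eta)r^2N/(2l)}\|f\|_\infty^2$; summed over the $N/l$ blocks this is still of order $\e^{-(1-4\eta)r^2N/(2l)}$, which for $l\geq 2$ is \emph{not} absorbed into $\e^{-(1-8\eta)r^2N/2}$ --- you obtain an exponent of order $N/l$ when you need order $N$, and no choice of $\eta$ repairs this. A related issue is that Proposition~\ref{l1.1} requires $d(a,b)<r<R_0$ for the bridge endpoints, whereas the support hypothesis only yields $d(x_{(k-1)l},x_{kl})<lr$ by the triangle inequality. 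Using the parameter $lr$ in place of $r$ forces $lr<R_0$, and iterating forces $l^kr<R_0$; hence only a bounded number of levels (independent of $N$) are available, not $O(\log_l N)$, and you are left with essentially the original problem on $\sim N$ points.

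These symptoms reflect a structural omission: none of the ingredients you invoke (Proposition~\ref{l1.1}, Lemmas 2.2--2.4) uses the simply-connected, positive-Ricci hypothesis, yet without it the conclusion --- and with it Theorem~\ref{MainTheorem} via Section~5 --- can fail by Eberle's counterexample. The paper's proof is organised quite differently. It conditions on the \emph{first} $N-l$ coordinates rather than on block endpoints, so that the outer measure $\mu_{a,a}^{N,l,1}$ has density $p_{l/N}(a,x_{N-l})/p_1(a,a)$ against the \emph{free} Brownian-motion marginal $\gamma_a^{N,l,1}$; Fang's Poincar\'e inequality on path space then controls the outer variance, the density oscillation producing the factor $\e^{(N/l)(\eta^2r^2+D^2/2)}$. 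The remaining $l$-point pinned variance $\Var(\widetilde F_l;\p_{x_{N-l},a}^{l/N})$ is handled, not by Proposition~\ref{l1.1}, but by the finite-dimensional local spectral gap of Lemma~4.2 on the tube $U_{x,a}^{r,l}$, and it is precisely there --- through the finiteness of $m_{a,b}$ --- that simple connectedness and positive Ricci curvature enter. There is no recursion; a single split suffices, and choosing $l$ large is what makes $D^2/(2l)$ and $L(\varepsilon)/l$ small enough to be dominated by $\eta r^2$.
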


\begin{proof} 
First choose an integer $N>>l>>1$, for each $f\in C^\infty(M^{N-1})$ , we define a function 
$f_l: M^{N-l}\mapsto \mathbb{R}$
as following,  
\begin{equation*}
f_l(x_1, \dots, x_{N-l})=\int_{M^{l-1}}f(x_1,\dots,x_{N-l}, y_{1},\dots, y_{l-1})\mu_{x_{N-l},a}^{l,\frac{l}{N}}
(dy_{1}\dots dy_{l-1})
\end{equation*}

 We also introduce a probability measure $\mu_{a,b}^{N,l,T}$ on $M^{N-l}$ as,  
\begin{equation*}
\mu_{a,b}^{N,l,T}(dx_1, \dots, dx_{N-l}):=
\frac{p_{\frac{lT}{N}}(b,x_{N-l})p_{\frac{T}{N}}(x_{N-l},x_{N-l-1})\cdots
p_{\frac{T}{N}}(x_1,a)}{p_T(a,a)}dx_{N-l}\dots dx_1 
\end{equation*}

Let $\wp_l:=\sigma\{\omega(i/N),\ 1\leqslant i \leqslant N-l\}$ be an 
$\sigma$-algebra on $\Omega_{a,a}$ and define a smooth cylinder function 
$\widehat{F}:\Omega_{a,a}\mapsto \mathbb{R}$ as,  
$$\widehat{F}(\omega):=f(\omega(1/N),\dots,\omega(1-1/N)), $$ 
For each $x_i \in M, 1\leqslant i \leqslant N-l$ and
$\omega \in \Omega_{x_{N-l},a}$, let  
$$\widetilde{F}_l(x_1,\dots x_{N-l},\omega):=f(x_1,\dots x_{N-l},\omega(1/l),\dots,\omega(1-1/l)). $$
It is not difficult to check, 
$$\E  _{a,a}\big[\widehat{F}|\omega(1/N)=x_1,\dots,\omega(1-l/N)=x_{N-l}\big]
=f_{l}(x_1,\dots x_{N-l}).$$
and 
\begin{equation*}
\E  _{x_{N-l},a}^{\frac{l}{N}}
\big[\widetilde{F}_l(x_1,\dots x_{N-l},\bullet)\big]=f_{l}(x_1,\dots x_{N-l}) 
\end{equation*}
 
Hence we can obtain,
\begin{equation}
\begin{split}\label{c2.5}
&\Var(f; \mu_{a,a}^{N,1})=\Var(\widehat{F};\p  _{a,a})\\
&=\E  _{a,a}[(\widehat{F}-\E  _{a,a}[\widehat{F}|\wp_{l}])^2]
+\E  _{a,a}[(\E  _{a,a}[\widehat{F}|\wp_{l}]-
\E  _{a,a}[\widehat{F}])^2]\\
&=\int_{M^{N-l}}\Var(\widetilde{F}_l;\p  _{x_{N-l},a}^{\frac{l}{N}})
\mu_{a,a}^{N,l,1}(dx)+\Var(f_{l},\mu_{a,a}^{N,l,1})\\
\end{split}
\end{equation}





Now we are going to estimate $\Var(f_{l},\mu_{a,a}^{N,l,1})$. 
Let $\p  _a^{1}$ be the distribution of a
standard Brownnian motion on compact manifold $M$ starting from $a$ with time parameter $1$, 
which is  a probability measure on the path space $\Omega_{a}$ over $M$ with
starting point $a$ and time $1$.  
Let 
$$\gamma_{a}^{N,l,1}(dx_1,\dots dx_{N-l}):=p_{\frac{1}{N}}(a,x_1),\dots
p_{\frac{1}{N}}(x_{N-l-1},x_{N-l})
\d x_1,\dots \d x_{N-l}$$ 
be a probability
measure on $M^{N-l}$, which is the joint distribution of 
$\big(\omega(i/N),  \omega \in \Omega_{a}\ i=1,2,\dots N-l\big)$ under $\p  _{a}^{1}$.  
By the Poincar\'{e} inequality for $\p  _{a}^{1}$ on the path space
over compact manifold, which is proved in \cite{Fa}, we get, 
\begin{equation*}
\begin{split}
\Var(f_l;\gamma_{a}^{N,l,1})&=\Var
(\overline{F}_l; \p  _a^1)
\leqslant C\E  _a^1|\D \overline{F}_l|_{\H_{\omega} }^2
\leqslant CN\sum_{i=1}^{N-l}\int_{M^{l-1}}|\nabla_i f_l|^2
d\gamma_{a}^{N,l,1}
\end{split}
\end{equation*}
where $\overline{F}_l(\omega):=f_l(\omega(1/N),\dots \omega(1-l/N))$ for 
each $\omega\in\Omega_{a}$, 
and 
$\D $ is the gradient operator related to Bismut tangent norm $|.|_{\H_{\omega}}$ in
path space over compact manifold $M$, and we also use
the relation 
$$|\D \overline{F}_l(\omega)|_{\H_{\omega} }^2\leqslant 
(N-l)\sum_{i=1}^{N-l}|\nabla_i f_l(\omega(1/N),\dots \omega(1-l/N))|^2,\quad \omega\in\Omega_{a},$$
in above inequality which can be checked by direct computation. 

Thus, we have
\begin{equation}
\begin{split}\label{c2.6}
\Var(f_l;\mu_{a,a}^{N,l,1})&=\Var\bigg(f_l;
\frac{p_{\frac{l}{N}}(a,x_{N-l})}{p_{1}(a,a)}\gamma_{a}^{N,l,1}\bigg)\\
&\leqslant C\text{osc}(p_{\frac{l}{N}}(a,\cdot))
N\sum_{i=1}^{N-l}\int_{M^{l-1}}|\nabla_i f_l|^2 \d\mu_{a,a}^{N,l,1}.
\end{split}
\end{equation}
here 
$\text{osc}(g(\cdot)):=\frac{\sup_{x \in M}g(x)}
{\inf_{x \in M}g(x)}$ for any function $g$ over $M$. And by (\ref{c1.0}), if 
$\frac{l}{N}<T(\eta,r)$, then 
$$\text{osc}(p_{\frac{l}{N}}(a,\cdot))\leqslant 
\e^{\frac{N}{l}(\eta^2r^2+\frac{D^2}{2})}$$
where $D$ denotes the diameter of the compact manifold $M$.
So by this and (\ref{c2.6}), if $\frac{l}{N}<T(\eta,r)$, then,  
\begin{equation}\label{d3.2}
\Var(f_l;\mu_{a,a}^{N,l,1})\leqslant 
C N\e^{\frac{N}{l}(\eta^2r^2+\frac{D^2}{2})} 
\sum_{i=1}^{N-l}\int_{M^{l-1}}|\nabla_i f_l|^2 \d\mu_{a,a}^{N,l,1}.
\end{equation}

Now we are going to estimate $|\nabla_i f_l|$, 
it is not hard to see for $1\leqslant i \leqslant N-l-1$,
\begin{equation}\label{d3.3}
|\nabla_i f_l|^2(x_1,\dots x_{N-l})\leqslant 
\int_{M^{l-1}}|\nabla_i f|^2(x_1,\dots x_{N-l},y_1,\dots y_{l-1})
\mu_{x_{N-l},a}^{l,\frac{l}{N}}(dy)
\end{equation}
as for $i=N-l$,
\begin{equation}\label{d3.4}
\begin{split}
&|\nabla_{N-l}f_l|^2(x_1,\dots x_{N-l})=\sup_{|v|=1}\d_{N-l}f_l(v)\\
&\leqslant \int_{M^{l-1}}|\nabla_{N-l} f|^2(x_1,\dots x_{N-l},y_1,\dots y_{l-1})
\mu_{x_{N-l},a}^{l,\frac{l}{N}}(dy)\\
&+\sup_{|v|=1}\Big|\d_z\Big(\E  _{z,a}^{\frac{l}{N}}
(\widetilde{F}_l)\Big)|_{z=x_{N-l}}(v)\Big|^2.
\end{split}
\end{equation}
where $\widetilde{F}_l(\omega):=f(x_1,\dots x_{N-l},\omega(1/l),\dots \omega(1-1/l)),$ is defined
as before. We can also use lemma 2.3 to estimate the differentiation 
of the expectation with starting point as before, but we can not 
make sure $d(a,x_{N-l})\leqslant r$ here to take the vector stated in lemma 2.2, so we have to choose another vector field
$X^{l,v}(s):=\parals_s(1-ls)^{+}v,\ 0\leqslant s \leqslant 1$. Since
for the anti-development $B_s$ in the definition 
of $\delta^{\frac{l}{N}}X$ in lemma 2.1, $$B_s=\beta_s+\int_0^s
\Big(\parals_u^{-1}\nabla\text{log}p_{\frac{(1-u)l}{N}}(\omega(u),a)\Big)du,\ 0\leqslant s<1$$
for some process $\beta_s$ whose distribution is the 
Brownnian motion with time parameter $\frac{l}{N}$ under 
the probability measure $\p_{x_{N-l},a}^{\frac{l}{N}} $(see \cite{Dr3}), then 
we get, 


\begin{equation*}
\begin{split}
&\Var\Big(\delta^{\frac{l}{N}}X^{l,v};\p  _{x_{N-l},a}^{\frac{l}{N}}\Big)
\leqslant \E  _{x_{N-l},a}^{\frac{l}{N}}\Big(\delta^{\frac{l}{N}}X^{l,v}\Big)^2\\
&\leqslant \E  _{x_{N-l},a}^{\frac{l}{N}}\Big[\int_0^{\frac{1}{l}}
\Big(-Nv+\frac{1}{2}\text{Ric}_{\omega(s)}(\parals_s(1-ls)v)\Big)
\Big(\d\beta_s+\parals_s^{-1}\nabla\text{log}p_{\frac{(1-s)l}{N}}(\omega(s),a)\d s\Big)\Big]^2\\
&\leqslant C(l)N^4,\quad v\in S_{x_{N-l}}M,
\end{split}
\end{equation*}
where in the last step of above inequality we use the estimate 
$|\nabla\text{log}p_{s}(x,a)|\leqslant C\big[\frac{d(x,a)}{s}+\frac{1}{\sqrt{s}}\big]$
for the heat kernel in compact manifold $M$. Also note that 
$X^{l,v}(\frac{i}{l})=0 \ \ 1\leqslant i \leqslant l$, so apply lemma 2.3, we have, 
\begin{equation}
\begin{split}\label{c2.8}
&\sup_{|v|=1}\Big|\d_{N-l}\Big(\E  _{x_{N-l},a}^{\frac{l}{N}}
(\widetilde{F}_l)\Big)(v)\Big|^2\\
&\leqslant
\sup_{|v|=1}\Big\{\Big|\E  _{x_{N-l},a}^{\frac{l}{N}}[d\widetilde{F}_l(X^{l,v})]\Big|+\Big[\Var(\delta^{\frac{l}{N}}
X^{l,v};\p  _{x_{N-l},a}^{\frac{l}{N}})\Big]^{1/2}
\Big[\Var(\widetilde{F}_l;\p  _{x_{N-l},a}^{\frac{l}{N}})\Big]^{1/2}\Big\}^2\\
&\leqslant C(l)N^4\Var\Big(\widetilde{F}_l;\p  _{x_{N-l},a}^{\frac{l}{N}}\Big).
\end{split}
\end{equation}

By (\ref{d3.3}), (\ref{d3.4}) and (\ref{c2.8}), we can derive some 
estimate of $|\nabla_i f_l|^2, \ 1\leqslant i \leqslant N-l$, then from that  
and (\ref{c2.5}), (\ref{d3.2}), we can obtain
the following, 
\begin{equation}
\begin{split}\label{f4}
&\Var(f; \mu_{a,a}^{N,1})\leqslant C(l)N\text{exp}
\big(\frac{N}{l}(\eta^2r^2+\frac{D^2}{2})\big)\sum_{i=1}^{N-l}
\int_{M^{N-1}}|\nabla_i f|^2 \mu_{a,a}^{N,1}(dx)\\ 
&+\Big[1+C(l)N^5\text{exp}
\big(\frac{N}{l}(\eta^2r^2+\frac{D^2}{2})\big)\Big]
\int_{M^{N-l}}\Var(\widetilde{F}_l;\p  _{x_{N-l},a}^{\frac{l}{N}})
\mu_{a,a}^{N,l,1}(dx)
\end{split}
\end{equation}

Note that
\begin{equation}\label{c2.9}
\Var\Big(\widetilde{F}_l;\p  _{x_{N-l},a}^{\frac{l}{N}}\Big)
=\Var\Big(f(x_1,\dots x_{N-l},\bullet,\dots,\bullet);\mu_{x_{N-l},a}^{l,\frac{l}{N}}\Big).
\end{equation}
Let $\overline{\mu}_{x_{N-l},a}^{l,\frac{l}{N}}$ be
normalization of $\mu_{x_{N-l},a}^{l,\frac{l}{N}}$ in the
subset $U_{x_{N-l},a}^{r,l}$ of $M^{l-1}$, i.e.
$$\overline{\mu}_{x_{N-l},a}^{l,\frac{l}{N}}(A)=
\mu_{x_{N-l},a}^{l,\frac{l}{N}}(A)/\mu_{x_{N-l},a}^{l,\frac{l}{N}}(U_{x_{N-l},a}^{r,l}),\quad
A\subseteq U_{x_{N-l},a}^{r,l}.$$  
For each smooth function $g$ with support in 
$\overline{U}_{x_{N-l},a}^{r,l}$, we have,
\begin{equation}\label{d1.6}
\begin{split}
&\Var\Big(g;\mu_{x_{N-l},a}^{l,\frac{l}{N}}\Big)\\
&\leqslant \mu_{x_{N-l},a}^{l,\frac{l}{N}}(U_{x_{N-l},a}^{r,l})
\Var\Big(g;
\overline{\mu}_{x_{N-l},a}^{l,\frac{l}{N}}\Big)
+\frac{\Big(1-\mu_{x_{N-l},a}^{l,\frac{l}{N}}(U_{x_{N-l},a}^{r,l})\Big)}
{\mu_{x_{N-l},a}^{l,\frac{l}{N}}(U_{x_{N-l},a}^{r,l})}
||g||_{\infty}^2.
\end{split}
\end{equation}
By asymptotic property (\ref{c1.0}), when $\frac{l}{N}<T(\eta,r)$, 
it satisfies that, 
\begin{equation}
\begin{split}\label{c2.10}
&1-\mu_{x_{N-l},a}^{l,\frac{l}{N}}(U_{x_{N-l},a}^{r,l})=
\mu_{x_{N-1},a}^{l,\frac{l}{N}}(\exists \ 0\leqslant i \leqslant l-1,\ d(z_i,z_{i+1})>r)\\
&\leqslant
\sum_{i=0}^{l-1}\frac{\int_{\d(z_i,z_{i+1})>r}p_{\frac{1}{N}}(x_{N-l},z_1),\dots
p_{\frac{1}{N}}(z_{l-1},a)\d z_1,\dots \d
z_{l-1}}{p_{\frac{l}{N}}(x_{N-l},a)}\\
& \leqslant
l\cdot\frac{\text{exp}(-\frac{(1-4\eta)Nr^2}{2})}{\text{exp}(-\frac{N}{2l}(\eta^2r^2+D^2))}.
\end{split}
\end{equation}
Hence if we choose a sufficient big $l$ such that $\frac{\eta^2r^2+D^2}{l}<2(1-4\eta)r^2$, 
there is an integer 
$\widetilde{N}(\eta,l,r)$, such that when $N>\widetilde{N}(\eta,l,r)$, then
\begin{equation}\label{f5}
\mu_{x_{N-l},a}^{l,\frac{l}{N}}(U_{x_{N-l},a}^{r,l})>\frac{1}{2} 
\end{equation}
  
Since we assume $\supp(f(x_1,\dots
x_{N-1}))\subset \overline{U}_{a,a}^{r,N}$, then, for fixed $x_1,\dots
x_{N-l}$, we have, $$\supp(f(x_1,\dots,x_{N-l},\bullet,\dots,\bullet))\subset
\overline{U}_{x_{N-l},a}^{r,l},$$
hence by (\ref{d1.6}), (\ref{c2.10}) and (\ref{f5}), 
for each integer $l$ sufficiently big, there exists an integer $\widetilde{N}(\eta,l,r)$, for 
each $N>\widetilde{N}(\eta,l,r)$, we have,  
\begin{equation}
\begin{split}\label{c2.11}
&\Var\Big(f(x_1,\dots
x_{N-l},\bullet,\dots,\bullet);\mu_{x_{N-l},a}^{l,\frac{l}{N}}\Big)\\
&\leqslant
\frac{1}{\lambda(U_{x_{N-l},a}^{r,l};\overline{\mu}_{x_{N-l},a}^{l,\frac{l}{N}})}
\times\sum_{i=1}^{l-1}\int|\nabla_{N-i+1} f|^2(x_1,\dots
x_{N-l},z_1,\dots z_{l-1})\mu_{x_{N-l},a}^{l,\frac{l}{N}}(dz)\\
&+2l\cdot\frac{\text{exp}(-\frac{(1-4\eta)Nr^2}{2})}
{\text{exp}(-\frac{N}{2l}(\eta^2r^2+D^2))}
||f||_{\infty}^2,
\end{split}
\end{equation}
where
\begin{equation}\label{f1}
\lambda(U_{x,a}^{r,l};\overline{\mu}_{x,a}^{l,\frac{l}{N}}):=\inf_{g \in C_0^{\infty}(U_{x,a}^{r,l})}
\frac{\int_{U_{x,a}^{r,l}} |\nabla
g|^2\d\overline{\mu}_{x_{N-l},a}^{l,\frac{l}{N}}}{\Var(g;\overline{\mu}_{x_{N-l},a}^{l,\frac{l}{N}})}.
\end{equation}
Therefore, by (\ref{f4}) and (\ref{c2.11}), we have
for each $l$ big enough, $N>\widetilde{N}(\eta,l,r)$ and $\frac{l}{N}<T(\eta,r)$, 
\begin{equation}
\begin{split}\label{c2.12}
&\Var(f;\mu_{a,a}^{N,1})\\& \leqslant
\Big[\frac{C(l)N^5\text{exp}
\big(\frac{N}{l}(\eta^2r^2+D^2/2)\big)}{\inf_{x\in
M}\lambda(U_{x,a}^{r,l};\overline{\mu}_{x,a}^{l,\frac{l}{N}})}\Big]
\sum_{i=1}^{N-1}\int_{M^{N-1}}|\nabla_i
f|^2
\d\mu_{a,a}^{N,1}\\
&~~~~~~~~
+\Big[
C(l)N^5\exp\bigg(N\bigg(-\frac{(1-4\eta)r^2}{2}+\frac{3\eta^2r^2+2D^2}{l}\bigg)
\bigg)
\Big]||f||_{\infty}^2.
\end{split}
\end{equation}

Finally, by (\ref{c2.12}) and 
the estimate of $\lambda(U_{x,a}^{r,l};\overline{\mu}_{x,a}^{l,\frac{l}{N}})$ derived in the 
below lemma 4.2 which is uniformly for all $x \in M$, for each integer $l$ sufficiently big,  
there exists an integer $N(\eta,l,r)>0$, such that if 
$N>N(\eta,l,r)$, then we have, 
\begin{equation}\label{f6}
\begin{split}
&\Var(f;\mu_{a,a}^{N,1})\\& \leqslant
\bigg[C(l,r)N^{C(l,r)}\exp\bigg(N\bigg(\frac{L(\varepsilon)
+\eta^2r^2+D^2/2}{l}+4D\varepsilon\bigg)
\bigg)\bigg]
\sum_{i=1}^{N-1}\int_{M^{N-1}}|\nabla_i
f|^2
\d\mu_{a,a}^{N,1}\\
&+\bigg[C(l,\eta,r)N^{C(l,r)}
\exp\bigg(N\bigg(-\frac{(1-4\eta)r^2}{2}+\frac{3\eta^2r^2+2D^2}{l}\bigg)
\bigg)\bigg]||f||_{\infty}^2.
\end{split}
\end{equation}
 
Note that all the constans $C$ and $L$ in above inequality do not depend on $N$, and
$L$ does not depend on $l$ and the starting point $a$.
So for any fixed $\eta>0$, $0<r<R_0$, we first choose a $\varepsilon=\frac{\eta r^2}{4D}$ to make
$4D\varepsilon=\eta r^2$, then take a $l$ big enough such that 
$\frac{L(\varepsilon)
+\eta^2r^2+D^2/2}{l}<\eta r^2$ and $\frac{3\eta^2r^2+2D^2}{l}<\eta r^2$ for 
the choosen $\varepsilon=\frac{\eta r^2}{4D}$ (i.e.
$l>N_0(\eta,r)$ for some constant $N_0(\eta,r)$ which only depends on $\eta$ and $r$). Hence
by (\ref{f6}), there is a constants $N_1(\eta,r)$, such that for each integer $l>N_1(\eta,r)$,  
there exists an integer $N(\eta,l,r)>0$, such that if 
$N>N(\eta,l,r)$, then we have, 
\begin{equation*}
\begin{split}
&\Var(f;\mu_{a,a}^{N,1})\\
&\leqslant
C(l,r)N^{C(l,r)}\e^{2N\eta r^2}
\sum_{i=1}^{N-1}\int_{M^{N-1}}|\nabla_i
f|^2
\d\mu_{a,a}^{N,1}
+C(l,\eta,r)N^{C(l,r)}
\e^{-\frac{N(1-8\eta)r^2}{2}}||f||_{\infty}^2.
\end{split}
\end{equation*}
By now we have completed the
proof.
\end{proof}

\begin{lem}
Let  $M$ be a compact simply connected manifold with strict Ricci 
curvature.  For $x\in M$, $r<R_0$ and $N\in {\mathbf N}$, $\lambda(U_{x,a}^{r,l};\overline{\mu}_{x,a}^{l,\frac{l}{N}})$
as defined in (\ref{f1}), there exists a constant $T(l,r)$, such that 
when $\frac{l}{N}<T(l,r)$, then for each $\varepsilon>0$ small enough, 
\begin{equation*}
\inf_{x,a \in
M}\lambda(U_{x,a}^{r,l};\overline{\mu}_{x,a}^{l,\frac{l}{N}})\geqslant
\frac{C(l,r)}{N^{C(l,r)}}\exp(-\big(\frac{L(\varepsilon)}{l}+4D\varepsilon\big)\cdot N).
\end{equation*}
where the constant $C(l,r)$ only depends on $l$, $r$ and 
the constant $L(\varepsilon)$ only depends on $\varepsilon$, not on $l$.
\end{lem}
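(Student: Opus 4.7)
The plan is to compare $\overline\mu_{x,a}^{l,l/N}$ with a truncated product Gaussian reference measure via Varadhan's short-time asymptotic and then invoke a Holley--Stroock perturbation. Because $r<R_0<\text{inj}_M$, one parametrises $y\in U_{x,a}^{r,l}$ by increments $v_i\in B_r(0)\subset T_{y_{i-1}}M$ via $y_i=\exp_{y_{i-1}}(v_i)$, and iterates parallel transport to realise $U_{x,a}^{r,l}$ as a subset of a single Euclidean space $\R^{d(l-1)}$; the Jacobian of $\exp$ on $B_r(0)$ is uniformly bounded above and below by compactness of $M$.

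Applying \eqref{c1.0} at time $t=1/N$ to each factor of the chain density,
\[
p_{1/N}(y_{i-1},y_i)=(N/2\pi)^{d/2}\exp\!\big(-\tfrac{N}{2}|v_i|^2+R_i\big),\qquad |R_i|\leq N\varepsilon^2/2,
\]
and to the normalisation $p_{l/N}(x,a)$ with remainder $O(N\varepsilon^2/l)$, one writes the density of $\overline\mu_{x,a}^{l,l/N}$ against the product Gaussian $\gamma_N^{\otimes(l-1)}$ restricted to $(B_r(0))^{l-1}$ as $e^{-V}$ with
\[
\mathrm{osc}(V)\leq \frac{L(\varepsilon)\,N}{l}+4D\varepsilon N+C(l,r)\log N.
\]
The first term collects the Varadhan remainders $R_i$ after cancellation with the normalisation (leaving only a $1/l$ fraction); the second comes from displacing a generic chain in $U_{x,a}^{r,l}$ onto a broken-geodesic reference chain of total length at most $D=\mathrm{diam}(M)$. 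Gaussian Poincar\'e gives the truncated product reference a spectral gap at least $N/C(l,r)$ (tensorisation preserves the $N$-per-factor bound, and restriction to each ball $B_r(0)$ costs only a polynomial factor in $N$ since $\gamma_N(B_r(0))\geq 1-e^{-cNr^2}$). The Holley--Stroock perturbation principle then yields
\[
\lambda(U_{x,a}^{r,l};\overline\mu_{x,a}^{l,l/N})\geq \frac{N}{C(l,r)}\,e^{-\mathrm{osc}(V)}\geq \frac{C(l,r)}{N^{C(l,r)}}\exp\!\Big(-\Big(\frac{L(\varepsilon)}{l}+4D\varepsilon\Big)N\Big),
\]
uniformly in $x,a\in M$, since every constant depends only on $M$, $l$, $r$, and $\varepsilon$.

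The main obstacle will be the oscillation bound itself: one must split $V$ carefully so that the coefficient of $N$ in $\mathrm{osc}(V)$ is $L(\varepsilon)/l$ rather than $L(\varepsilon)$, which requires a precise accounting of the cancellation between the numerator Varadhan remainders and the denominator $p_{l/N}(x,a)$. The coefficient $4D\varepsilon$ independent of $l$ must likewise arise from a single $\varepsilon$-net matching of the endpoints through a broken geodesic rather than one step per chain link. Both refinements exploit simple connectedness (so that a well-behaved broken-geodesic reference chain exists from $x$ to $a$) and positive Ricci curvature (for uniform heat-kernel estimates of Li--Yau type that make all remainder constants uniform in the base point).
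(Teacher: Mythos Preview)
Your Holley--Stroock perturbation from a product Gaussian cannot deliver the stated exponent. Write the density of $\overline\mu_{x,a}^{l,l/N}$ in increment coordinates $v_1,\dots,v_{l-1}$ as you propose. The first $l-1$ heat-kernel factors $p_{1/N}(z_{i-1},z_i)$ match $\gamma_N^{\otimes(l-1)}$ up to Varadhan remainders and Jacobians, but the final pinning factor $p_{1/N}(z_{l-1},a)$ is \emph{not} absorbed into the reference: it contributes $-\tfrac{N}{2}d(z_{l-1},a)^2$ to $V$, and $d(z_{l-1},a)^2$ ranges over essentially all of $[0,r^2)$ on $U_{x,a}^{r,l}$. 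Hence $\mathrm{osc}(V)\geq cNr^2$ with a coefficient that does not shrink with $\varepsilon$ or with $l$, so Holley--Stroock yields at best $\lambda\geq C\,e^{-cNr^2}$, far weaker than $\exp\!\big(-(\tfrac{L(\varepsilon)}{l}+4D\varepsilon)N\big)$. More generally, as soon as $E_{x,a}^l$ has several local minima (multiple short geodesics from $x$ to $a$), $\overline\mu_{x,a}^{l,l/N}$ is genuinely multimodal at small temperature $l/N$, and no perturbation from a unimodal Gaussian can capture its spectral gap; the exponent is governed by barrier heights between wells, not by the total oscillation of the potential.

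The paper's argument is of a different nature. It first replaces $\overline\mu_{x,a}^{l,l/N}$ by the Gibbs measure $\overline\nu_{x,a}^{l,l/N}\propto e^{-NE_{x,a}^l/l}$ (a bounded density ratio, \cite{EB2}) and then applies the simulated-annealing/low-temperature spectral gap estimate of \cite{EB2} (in the spirit of \cite{HKS}): $\lambda(U_{x,a}^{r,l};\overline\nu)\geq C(l)(l/N)^{3(l-1)d-2}\exp(-Nm_{x,a}^{r,l}/l)$, where $m_{x,a}^{r,l}$ is the mountain-pass critical depth of $E_{x,a}^l$ on $\overline U_{x,a}^{r,l}$. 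The work then goes into making the constants uniform in $x,a$ and bounding $\sup_{x,a}m_{x,a}^{r,l}$: one shows $m_{x,a}^{r,l}\leq m_{x,a}$ (the infinite-dimensional critical depth on loop space), uses Eberle's Theorem~1.4 in \cite{EB2}---this is where simple connectedness and strictly positive Ricci curvature enter---to get $m_{x,a}<\infty$ for non-conjugate pairs, and finally proves a continuity estimate $m_{\tilde x,\tilde a}^{r,l}\leq m_{x,a}+4lD\varepsilon$ for $d(x,\tilde x),d(a,\tilde a)<\varepsilon$, so that an $\varepsilon$-net gives $\sup_{x,a}m_{x,a}^{r,l}\leq L(\varepsilon)+4lD\varepsilon$ with $L(\varepsilon)$ independent of $l$. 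Your invocation of the hypotheses (existence of a reference chain; Li--Yau bounds) misses their actual role: they control the Morse-theoretic landscape of the path energy, which is exactly what a pure perturbation argument cannot see.
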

\begin{proof} 
Step (a): Following  \cite{EB2} define a measure $\nu_{a,b}^{l,T}$ on $M^{l-1}$ as an approximating measure:
\begin{equation*}
\nu_{a,b}^{l,T}(\d z)=\text{exp}(-E_{a,b}^l/T)dz_1,\dots dz_{l-1},
\end{equation*}
where
\begin{equation*}
E_{a,b}^l(z_1,\dots
z_{l-1})=\frac{l}{2}\sum_{i=0}^{l-1}d(z_i,z_{i+1})^2,\quad
z_0=a,z_l=b.
\end{equation*}
Let $\overline{\nu}_{a,b}^{l,T}(dz)$ be normalization of
${\nu}_{a,b}^{l,T}(dz)$ in the subset $U_{a,b}^{r,l}$ of
$M^{l-1}$. From \cite[lemma 3.2]{EB2}, for each fixed $l>0$,
$$\varlimsup_{T\downarrow 0}\sup_{a,b \in M}\sup_{U_{a,b}^{r,l}}
\text{osc}\big(\d{\mu}_{a,b}^{l,T}/\d{\nu}_{a,b}^{l,T}\big)\leqslant
C(l,r),$$ So, there is a $T(l,r)>0$ such that for any
$\frac{l}{N}<T(l,r)$,
\begin{equation}\label{c2.13}
\lambda(U_{x,a}^{r,l};\overline{\mu}_{x,a}^{l,\frac{l}{N}})\geqslant
\frac{1}{2C(l,r)}\lambda(U_{x,a}^{r,l};\overline{\nu}_{x,a}^{l,\frac{l}{N}}).
\end{equation}

As in \cite{EB2}, let $U_{a,b,\triangle}^{r,l}:=\overline{U}_{a,b}^{r,l}/\sim$
be the one point compactification of $U_{a,b}^{r,l}$, which is obtained by identifying 
the boundry $\partial U_{a,b}^{r,l}$ as a single point $\triangle$. 
And let $\widetilde{C}([0,1];\overline{U}_{a,b}^{r,l})$ denote the path in 
$\overline{U}_{a,b}^{r,l}$ which is restricted to a continuous path on the space
$U_{a,b,\triangle}^{r,l}$. Then define


\begin{equation}\label{d3.6}
M_{a,b}^{r,l}(z):=\inf_{p \in \mathbf I_{a,b}^{r,l}}
\sup_{s\in[0,1]}E_{a,b}^l(p(s))\quad a,b \in M,
\end{equation}
where $\mathbf I_{a,b}^{r,l}=\big\{p
\in \widetilde{C}([0,1];\overline{U}_{a,b}^{r,l});\
p(0)=z,p(1)=z_0\big\}$ and 
$z_0$ is a minimum point of $E_{a,b}^l$ in
$\overline{U}_{a,b}^{r,l}$. And define 
\begin{equation}\label{d3.7}
m_{a,b}^{r,l}:=\sup_{\overline{U}_{a,b}^{r,l}}(M_{a,b}^{r,l}-E_{a,b}^l)
\end{equation}
In fact, if we take the supremum only among the local minimum points of $E_{a,b}^l$  on 
$\overline{U}_{a,b}^{r,l}$ in the above
definition, the value of $m_{a,b}^{r,l}$ will not change, see lemma 2.1 in \cite{EB2}. 

According to the proof of Theorem 2.2 in \cite{EB2},  for each 
$x,a \in M$, if $\frac{l}{N}$ is less
than some $T(x,a,l)$,  
\begin{equation*}
\lambda(U_{x,a}^{r,l};\overline{\nu}_{x,a}^{l,\frac{l}{N}})\geqslant
C(x,a,l)\Big(\frac{l}{N}\Big)^{3(l-1)d-2}\text{exp}\Big(-\frac{Nm_{x,a}^{r,l}}{l}\Big),\quad
x\in M,
\end{equation*}
where $d$ is the dimension of $M$. 

Now our goal is to confirm that the constants $T(x,a,l)$, $C(x,a,l)$ above can be choosen 
to be independant of $x,a \in M$. From step by step checking the proof 
Theorem 2.2 in \cite{EB2}, if the following three conditions are true, then 
we can find such constants:
\begin{enumerate}
\item
 Uniform estimate on the gradient of the energy function:
there exists a  constant $C(l)>0$ depending only on $l$ such that
$$\sup_{x,a\in M}\sup_{z \in \overline{U}_{x,a}^{r,l}}|\nabla E_{x,a}^l(z)|^2\leqslant C(l).$$

\item
  A lower bound on the size of the tube $U_{x,a}^{r,l}$: 
there exists a  constant $\theta(l)>0 $, such that for all $R<1$,
\begin{equation*}
\sup_{x,a\in M}\sup_{ z\in \partial U_{x,a}^{r,l}} 
{\frac{Vol(B_R(z)/U_{x,a}^{r,l})}{Vol(B_R(z))}} \geqslant \theta(l),
\end{equation*}
where $Vol(A)$ denotes the Riemannian volume of a subset $A$ of $M^{l-1}$.

\item   For $T$ sufficiently small, say smaller than some $T(l)>0$,
there are finite subsets $\Sigma_T^0(x,a)\subset \partial U_{x,a}^{r,l}$
and  $\Sigma_{T}(x,a)\subset \overline{U}_{x,a}^{r,l}$ such that
\begin{itemize}
\item $\Sigma_T^0(x,a)\subset \Sigma_{T}(x,a)$
\item  $\Sigma_{T}(x,a)$ contains a minimum point
$z_0(x,a)$ of $E_{x,a}^l$.
\item $\partial U_{x,a}^{r,l}\subseteq\bigcup_{z\in
\Sigma_{T}^0(x,a)}B_T(z)$,  $\overline{U}_{x,a}^{r,l}\subseteq\bigcup_{z\in
\Sigma_{T}(x,a)}B_T(z)$.
\item 
$\sup_{x,a\in M}  \# \Sigma_T(x,a)\leqslant C(l) T^{-(l-1)d}$ for some constants $C(l)$.

\end{itemize}
where $\#$ means the number of elements in a finite set.

\end{enumerate}

%

Since $R_0$ from proposition 3.2 is less than 
the injective radius of compact manifold $M$,  when $r\in (0,R_0)$, $E_{x,a}^l$ is differentiable in 
the domain $U_{x,a}^{r,l}$ and condition 1 can be checked by direct computation. 
From the proof of Corollary 3.3 in \cite{EB2}, condition 2 is
true. 

For condition 3, note that there is a $T(l)>0$,  
for each $T<T(l)$, due to the compactness of $M^{l-1}$, we can find
a finite subset $\widetilde{\Sigma}_T \subseteq M$ such that 
$ M^{l-1} \subseteq \bigcup_{z\in
\widetilde{\Sigma}_T}B_{T}(z)$ and
$\#\widetilde{\Sigma}_T \leqslant C(l)T^{-(l-1)d}$. Now since
$M^{l-1}\subseteq\bigcup_{z\in \widetilde{\Sigma}_{T/2}}B_{T/2}(z)
$, we start to construct the set $\Sigma_{T}(x,a)$ as
following:
\begin{enumerate}
\item [(i)] if $z \in \widetilde{\Sigma}_{T/2}$ and
$B_{T/2}(z)\subset U_{x,a}^{r,l}$, then add such $z$
into $\Sigma_{T}(x,a)$;

\item[(ii)] if $z \in \widetilde{\Sigma}_{T/2}$ and
$B_{T/2}(z)\cap \partial U_{x,a}^{r,l}\neq \varnothing$,
then take a point $\tilde{z} \in B_{T/2}(z) \cap \partial
U_{x,a}^{r,l}$ and add this point $\tilde{z}$ into
$\Sigma_{T}(x,a)$.

\item[(iii)] add a  minimum point $z_0(x,a)$ of $E_{x,a}^l$ on
$\overline{U}_{x,a}^{r,l}$ into $\Sigma_{T}(x,a)$.

\end{enumerate}
Since in (ii), $B_T(\tilde{z})\supseteq B_{T/2}(z)$,
we have
\begin{equation*}\begin{split}&~~~~~~~~\bigcup_{\tilde{z}\in \Sigma_{T}(x,a)}B_T(\tilde{z})\supseteq
\bigcup_{z \in
\widetilde{\Sigma}_{T/2}}B_{T/2}(z)\supseteq
M^{l-1}\supseteq\overline{U}_{x,a}^{r,l}
\\&\bigcup_{\tilde{z}\in \Sigma_{T}(x,a)\cap \partial U_{x,a}^{r,l} }B_T(\tilde{z})\supseteq
\bigcup_{z \in \widetilde{\Sigma}_{T/2};\
B_{T/2}(z)\cap \partial U_{x,a}^{r,l}\neq
\varnothing}B_{T/2}(z)\supseteq \partial
U_{x,a}^{r,l},\quad x,a\in M\end{split}\end{equation*} and
$\sharp\Sigma_{T}(x,a)\leqslant\sharp
\widetilde{\Sigma}_{T/2}+1 \leqslant
2^{(l-1)d}C(l)T^{-(l-1)d}$, so condition 3 are satisfied.

 By the above argument, we can find constants $T(l)$ and $C(l)$,  which are most importantly independent of $x$ and $a$ ,
such that  if 
$\frac{l}{N}<T(l)$, then

\begin{equation}\label{c2.14}
 { \lambda(U_{x,a}^{r,l};\overline{\nu}_{x,a}^{l,\frac{l}{N}})}
  \geqslant
C(l)\Big(\frac{l}{N}\Big)^{3(l-1)d-2}\text{exp}\Big(-\frac{Nm_{x,a}^{r,l}}{l}\Big) .\end{equation}

Step (b): In the following, we try to give some uniform estimate about
$m_{x,a}^{r,l}$. 
As in \cite{EB2}, define the energy of a path
$\gamma \in \Omega_{a,b}$(possibly infinite) as:
\begin{equation*}
E(\gamma):=\frac{1}{2}\sup\sum_{i=0}^{k-1}
\frac{d(\gamma(s_i),\gamma(s_{i+1}))^2}{s_{i+1}-s_i}
\end{equation*}
where the supremum is obtained over all partitions
$0=s_0<s_1<\dots s_k=1$. Assume $a,b \in M$ and $a$ is not conjugate
to $b$, let $\Xi_{a,b}$ denote the set of all geodesics (i.e.
critical points of $E$) in $\Omega_{a,b}$, and let
$\Xi_{a,b}^{\text{min}}$ denote the subset of all local energy
minimum. Fix a global energy minimum geodesic $\gamma_{a,b} \in
\Omega_{a,b}$, then for each  geodesic $\gamma
\in \Xi_{a,b}$, we define:
\begin{equation*}
M_{a,b}(\gamma):=\inf_{H \in \mathbf I }\sup_{s \in [0,1]}E\circ H(s)
\end{equation*}
where $\mathbf I=\big\{H \in
C([0,1],\Omega_{a,b}); H(0)=\gamma, H(1)=\gamma_{a,b} \big\}$.
And define
\begin{equation*}
m_{a,b}:=\sup\big\{M_{a,b}(\gamma)-E(\gamma);\ \gamma\in
\Xi_{a,b}^{\text{min}}\big\}.
\end{equation*}

The item $m_{a,b}$ can be viewed as an infinite dimensional 
version of the item (\ref{d3.7}). Futhermore, every point $z\in U_{a,b}^{r,l}$ corresponds
to a piecewise geodesic in $M$, so intuitively we may have more choices to take supremum in 
defining $M_{a,b}$ than in defining $M_{a,b}^{r,l}$ as 
(\ref{d3.6}). In fact, according to the proof of
Corollary 1.5 in \cite{EB2}, we have, 
\begin{equation}\label{d5.2}
m_{a,b}^{r.l}\leqslant m_{a,b},\quad r\in(0,\text{inj} M),\ \ 
l\in \mathbb{N}^+.
\end{equation}

For $0<r<R_0$, choose a $\varepsilon>0$,  satisfying with
$r+\varepsilon<\text{inj}M$. For any $x \in M$, $a \in M$ and $\tilde{x} \in
B_{\varepsilon}(x)$, $\tilde{a} \in
B_{\varepsilon}(a)$, if $z=(z_1,\dots z_{l-1})\in
U_{\tilde{x},\tilde{a}}^{r,l}$, then 
\begin{equation*}
\begin{split}
&d(z_1,x)\leqslant d(x,\tilde{x})+d(z_1,\tilde{x})< r+\varepsilon \
\ \ \ d(z_{l-1},a)\leqslant d(a,\tilde{a})+d(z_{l-1},\tilde{a})< r+\varepsilon \\
&\ \text{and}\ \ d(z_i,z_{i+1})<r, \ 1\leqslant i \leqslant l-2
\end{split}
\end{equation*}
which means $z \in U_{x,a}^{r+\varepsilon,l}$, hence we have
$\overline{U}_{\tilde{x},\tilde{a}}^{r,l}\subseteq
U_{x,a}^{r+\varepsilon,l}$. 

Suppose $z_0(\tilde{x},\tilde{a})$ be a minimum point of 
$E_{\tilde{x},\tilde{a}}^{l}$ on $\overline{U}_{\tilde{x},\tilde{a}}^{r,l}$
, and $z_0(x,a)$ be a minimum point of $E_{x,a}^{l}$ on
$\overline{U}_{x,a}^{r+\varepsilon,l}$, by the definition of $M_{a,b}^{r,l}$ 
in (\ref{d3.6}), for each
$\delta>0$ and each $z \in \overline{U}_{\tilde{x},\tilde{a}}^{r,l}\subseteq
U_{x,a}^{r+\varepsilon,l}$, there exists a path $q_1 \in
\widetilde{C}([0,1];\overline{U}_{x,a}^{r+\varepsilon,l})$, such
that $q_1(0)=z, q_1(1)=z_0(x,a)$, and
\begin{equation}\label{d3.8}
E_{x,a}^l\circ q_1(s)\leqslant
E_{x,a}^l(z)+m_{x,a}^{r+\varepsilon,l}+\delta ,\ 0\leqslant s \leqslant 1
\end{equation}
As the same reason, we can find a a path $q_2 \in
\widetilde{C}([0,1];\overline{U}_{x,a}^{r+\varepsilon,l})$
with $q_2(0)=z_0(\tilde{x},\tilde{a}), q_2(1)=z_0(x,a)$ and 
\begin{equation}\label{d3.9}
E_{x,a}^l\circ q_2(s)\leqslant
E_{x,a}^l(z_0(\tilde{x},\tilde{a}))+
m_{x,a}^{r+\varepsilon,l}+\delta,\ \ 
0\leqslant s \leqslant 1. 
\end{equation}

Let
\begin{equation*}
q(s)=
\begin{cases}
q_1(2s)\ &\text{if}\ 0<s\leqslant \frac{1}{2},\\
q_2(2-2s)\ &\text{if}\  \frac{1}{2}<s\leqslant 1
\end{cases}
\end{equation*} and $\tau=\inf\{s; q(s) \in \partial
U_{\tilde{x},\tilde{a}}^{r,l}\}\wedge1$, $\hat{\tau}=\sup\{s; q(s) \in
\partial U_{\tilde{x},\tilde{a}}^{r,l}\}\vee1$. Define
\begin{equation*}
\widetilde{q}(s)=
\begin{cases}
q(s)\ &\text{if}\ s\in [0,\tau)\cup (\hat{\tau},1],\\
q(\hat{\tau})\ &\text{if}\  s\in [\tau,\hat{\tau}].
\end{cases}
\end{equation*}
Then $\tilde{q} \in \widetilde{C}([0,1];
\overline{U}_{\tilde{x},\tilde{a}}^{r,l})$ and $\widetilde{q}(0)=z,\
\widetilde{q}(1)=z_0(\tilde{x},\tilde{a})$. Note that for each $z \in
\overline{U}_{\tilde{x},\tilde{a}}^{r,l}$,
\begin{equation}\label{d3.10}
\begin{split}
&|E_{\tilde{x},\tilde{a}}^l(z)-E_{x,a}^l(z)|\\
&=\big|\frac{l(d(z_1,x)^2-d(z_1,\tilde{x})^2)}{2}
+\frac{l(d(z_{l-1},a)^2-d(z_{l-1},\tilde{a})^2)}{2}\big|\\
&\leqslant (d(a,\tilde{a})+ d(x, \tilde{x}))D l\leqslant 2l D\varepsilon 
\end{split}
\end{equation}
where $D$ is the diameter of the manifold $M$. Then, by
(\ref{d3.8}), (\ref{d3.9}), (\ref{d3.10}) and the definition of 
$\tilde{q}$, we have
\begin{equation*}
\begin{split}
E_{\tilde{x},\tilde{a}}^l\circ\widetilde{q}(s)&\leqslant
E_{x,a}^l\circ\widetilde{q}(s)+2lD\varepsilon\leqslant
\text{max}\{E_{x,a}^l(z),E_{x,a}^l(z_0(\tilde{x},\tilde{a}))\}
+ m_{x,a}^{r+\varepsilon,l}+\delta+ 2lD\varepsilon\\
&\leqslant
\text{max}\{E_{\tilde{x},\tilde{a}}^l(z),E_{\tilde{x},\tilde{a}}^l(z_0(\tilde{x},\tilde{a}))\}
+ m_{x,a}^{r+\varepsilon,l}+\delta+ 4lD\varepsilon\\
&=E_{\tilde{x},\tilde{a}}^l(z)+  m_{x,a}^{r+\varepsilon,l}+\delta+
4lD\varepsilon,\quad 0\leqslant s\leqslant 1.
\end{split}
\end{equation*}
The equality in the last step above is due to the 
fact that $z_0(\tilde{x},\tilde{a})$ is a minimum point of 
$E_{\tilde{x},\tilde{a}}^l$ on $\overline{U}_{\tilde{x},\tilde{a}}^{r,l}$. 
 Thus, according to the above inequality and the definition 
of $M_{\tilde{x},\tilde{a}}^{r,l}$, and by the arbitrary of
$\delta$, we obtain $M_{\tilde{x},\tilde{a}}^{r,l}(z)\leqslant E_{\tilde{x},\tilde{a}}^l(z)+
m_{x,a}^{r+\varepsilon,l}+ 4lD\varepsilon$. Hence, by this
(\ref{d5.2}) and the definition of $m_{\tilde{x},\tilde{a}}^{r,l}$, when $d(x,\tilde{x})<\varepsilon$ and $d(a,\tilde{a})<\varepsilon$, we have
\begin{equation}\label{c2.15}
m_{\tilde{x},\tilde{a}}^{r,l}\leqslant
m_{x,a}^{r+\varepsilon,l}+4lD\varepsilon \leqslant m_{x,a}+
4lD\varepsilon.
\end{equation}
By \cite[Theorem 1.4]{EB2}, when $M$ is a compact simply connected manifold
with strict Ricci curvature, we have $m_{a,b}<\infty$ for each pair of 
$a,b\in M$ if $a$ is not conjugate to $b$.
Since for any $\varepsilon>0,\ a \in M$, there exists a finite set
$\Theta_{\varepsilon,a}\subseteq\{x\in M:x~\hbox{is not conjugate to}~a\}$
such that $\bigcup_{x \in
\Theta_{\varepsilon,a}}B_{\varepsilon}(x)\supseteq M$, then by
(\ref{c2.15}), for each $a, b \in M$ with $d(a,b)<\varepsilon$,  
\begin{equation}\label{c2.16}
 \sup_{y\in M}m_{y,b}^{r,l}\leqslant \sup_{x
\in \Theta_{\varepsilon,a}}m_{x,a}+4lD\varepsilon.
\end{equation}
As the same way, there is a finite set $\Theta_{\varepsilon}$, such that
$\bigcup_{x \in
\Theta_{\varepsilon}}B_{\varepsilon}(x)\supseteq M$, by (\ref{c2.15})
and (\ref{c2.16}),
\begin{equation}\label{c2.161}
 \sup_{y,b \in M}m_{y,b}^{r,l}\leqslant \sup_{a \in \Theta_{\varepsilon}}\sup_{x
\in \Theta_{\varepsilon,a}}m_{x,a}+4lD\varepsilon.
\end{equation} 
Let 
\begin{equation*}
L(\varepsilon):=\sup_{a \in \Theta_{\varepsilon}}\sup_{x
\in \Theta_{\varepsilon,a}}m_{x,a}<+\infty.
\end{equation*}
So, by (\ref{c2.13}), (\ref{c2.14}) and (\ref{c2.161}), if 
$\frac{l}{N}$ less than some $T(l,r)$, then
\begin{equation}\label{c2.17}
\inf_{x,a \in
M}\lambda(U_{x,a}^{r,l};\overline{\mu}_{x,a}^{l,\frac{l}{N}})\geqslant
\frac{C(l,r)}{N^{C(l,r)}}\text{exp}\Big(-\big(\frac{L(\varepsilon)}{l}+4D\varepsilon\big)\cdot N\Big).
\end{equation}
where constant $C(l,r)$ only depends on $l$, $r$, 
by now we have completed the proof.
\end{proof}

\section{The Main Theorem}
\begin{thm}\label{MainTheorem} Let $M$ be a  simply connected compact manifold  with strict
positive Ricci curvature.  For any small $\alpha>0$, there exists a constant $s_0>0$ such
that the following weak Poincar\'{e}  inequality holds, i.e.
\begin{equation}\label{c3.11}
\Var(F;\p  _{a,a})\leqslant
\frac{1}{s^\alpha}\EE_{a,a}(F,F) +
s||F||_{\infty}^2,\quad s\in(0,s_0),\ \ F\in \scr{D}(\scr{E}_{a,a}).
\end{equation}
The constants $s_0$ does not depend on the starting point $a \in M$. 
\end{thm}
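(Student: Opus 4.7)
The plan is to combine the two main ingredients prepared in the preceding sections. For a positive integer $N$ (to be chosen as a function of $s$), a small radius $r\in(0,R_0)$, and a cylinder function $F\in \F C_b^{\infty}(\Omega_{a,a})$, define
\[
G(x_1,\dots,x_{N-1}):=\E_{a,a}\bigl[F\,\big|\,\omega(i/N)=x_i,\ 1\le i\le N-1\bigr].
\]
The conditional variance formula splits the problem in two:
\[
\Var(F;\p_{a,a})=\int_{M^{N-1}}\Var\bigl(F\,\big|\,\omega(\cdot/N)=x\bigr)\,\mu_{a,a}^{N,1}(dx)+\Var(G;\mu_{a,a}^{N,1}).
\]
The first term is a fibre of sub-path variances with pinned endpoints to which Proposition \ref{l1.1} will be applied with $T=1/N$ on each of the $N$ sub-paths; the second term is a variance on the product manifold $M^{N-1}$ to which Proposition \ref{p2.1} will be applied.

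Both target results require the endpoints of the relevant pieces to lie in chains of small geodesic balls, hence the need to localize to the set $\overline{U}_{a,a}^{r,N}$. I would introduce a smooth cutoff $\chi:M^{N-1}\to[0,1]$ that is $1$ on a slightly shrunken copy of $\overline{U}_{a,a}^{r,N}$ and $0$ outside $\overline{U}_{a,a}^{r,N}$, with $|\nabla\chi|\le C/(\eta r)$. Writing $F_\chi(\omega):=F(\omega)\chi(\omega(1/N),\dots,\omega(1-1/N))$ and using the elementary bound
\[
\Var(F;\p_{a,a})\le\Var(F_\chi;\p_{a,a})+3\|F\|_\infty^{2}\,\p_{a,a}(\chi\neq 1),
\]
Varadhan's small-time heat-kernel asymptotics (\ref{c1.0}) applied as in (\ref{c2.10}) make the boundary loss decay like $N e^{-(1-4\eta)Nr^{2}/2}\|F\|_\infty^{2}$. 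Once localized, each sub-path has $d(x_{i-1},x_i)<r$ and Proposition \ref{l1.1} with $T=1/N$ bounds the sub-path conditional variance by $(C(r)/N)\E_{x_{i-1},x_i}^{1/N}[|\D_0 F|^{2}]+C(\eta,r)e^{-(1-4\eta)Nr^{2}/2}\|F\|_\infty^{2}$. Summing over $i$ and integrating against $\mu_{a,a}^{N,1}$ returns a multiple of $\EE_{a,a}(F,F)$ plus an exponentially small remainder.

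For the second, discretized term, Proposition \ref{p2.1} applied to $G\chi$ yields
\[
\Var(G\chi;\mu_{a,a}^{N,1})\le C(l,r)N^{C(l,r)}e^{2N\eta r^{2}}\!\!\sum_{i=1}^{N-1}\int|\nabla_i(G\chi)|^{2}d\mu_{a,a}^{N,1}+C(l,\eta,r)N^{C(l,r)}e^{-(1-8\eta)Nr^{2}/2}\|F\|_\infty^{2}.
\]
The crucial auxiliary estimate is a uniform bound $|\nabla_i G|^{2}(x)\le C\,\E[|\D_0 F|^{2}]$ on $\overline{U}_{a,a}^{r,N}$. This is obtained by applying Lemma 2.3 to each of the two sub-paths adjacent to $x_i$ with the damped parallel-transport vector fields of Lemma 2.2 (and bounding the variance of the corresponding $\delta^{1/N}X$ exactly as in the proof of Proposition \ref{p2.1}, cf.\ (\ref{c2.8})). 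Integrating this bound and adding to the previous step produces
\[
\Var(F;\p_{a,a})\le C(l,r)N^{C(l,r)+1}e^{2N\eta r^{2}}\EE_{a,a}(F,F)+C(l,\eta,r)N^{C(l,r)+1}e^{-c(\eta)Nr^{2}}\|F\|_\infty^{2}.
\]

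To finish, given small $\alpha>0$ I would first fix $\eta=\eta(\alpha,r)$ so that $4\eta r^{2}/c(\eta)<\alpha/2$, then take $l=l(\eta,r)$ as in Proposition \ref{p2.1}, and finally choose $N=\lfloor -\log s/(c(\eta)r^{2})\rfloor$. With this choice, $e^{-c(\eta)Nr^{2}}\le s$, while $e^{2N\eta r^{2}}\le s^{-\alpha/2}$, and for $s<s_0$ small enough the polynomial factor $N^{C(l,r)+1}$ is absorbed into $s^{-\alpha/2}$, producing the required inequality. Uniformity of $s_0$ in the base point $a\in M$ is inherited from the uniform constants in Propositions \ref{l1.1} and \ref{p2.1} (themselves ultimately resting on the uniformity of $m_{x,a}^{r,l}$ established in Lemma 4.2). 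The main obstacle is the combinatorial bookkeeping in balancing the exponential and polynomial factors: the pre-factor $N^{C(l,r)}e^{2N\eta r^{2}}$ from Proposition \ref{p2.1} forces $N$ to grow only logarithmically in $1/s$, so that one must simultaneously control the sub-path Poincar\'e losses, the derivative bound on $G$, and the boundary cutoff error with the single scale $N\sim\log(1/s)$.
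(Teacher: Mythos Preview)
Your plan follows the paper's proof essentially step for step: the same conditional-variance split at the times $i/N$, the same cut-off to $\overline{U}_{a,a}^{r,N}$, Proposition~\ref{l1.1} on each sub-path for the fibre term, Proposition~\ref{p2.1} for the discretized term, and the same final balancing of $N\sim\log(1/s)$ against $s$.

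The one point that needs correction is your ``crucial auxiliary estimate'' $|\nabla_i G|^2(x)\le C\,\E[|\D_0 F|^2]$. As written this is a pointwise bound by a constant, and it is false. Applying Lemma~2.3 to the two sub-paths adjacent to $x_i$ and using Cauchy--Schwarz on the covariance term leaves, besides the $dF(X)$ contribution, a factor $\Var_j(F^{[N]};\p_{x_{j-1},x_j}^{1/N})$ for $j=i,i{+}1$; bounding $\Var(\delta^{1/N}X)$ alone (your reference to~(\ref{c2.8})) controls only the \emph{other} factor in that Cauchy--Schwarz. The paper handles this by quoting Eberle's Lemmas~6.1--6.2 to obtain the integrated estimate~(\ref{c3.4}),
\[
\sum_{i=1}^{N-1}\int|\nabla_i f^{[N]}|^2\,d\mu_{a,a}^{N,1}\le C(r)N\,\E_{a,a}|\D_0 F|_{\H_\omega^0}^2+C(r)N\int_{U_{a,a}^{r,N}}\sum_{j}\int\Var_j(F^{[N]};\p_{x_{j-1},x_j}^{1/N})\prod_{i\ne j}\p_{x_{i-1},x_i}^{1/N}(d\omega_i)\,d\mu_{a,a}^{N,1},
\]
and then feeds the extra sub-path variance term back through Proposition~\ref{l1.1} and the inequality $\sum_j|\D_{0,(j)}F^{[N]}|^2\le N|\D_0 F|^2_{\H_\omega^0}$ (equation~(\ref{d4.1})). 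Once you replace your pointwise claim by this integrated bound with the additional variance term, everything closes up exactly as in the paper and your final parameter choices are correct.
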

\begin{proof}
It suffices to show that (\ref{c3.11}) holds for $F\in\F
C_b^\infty(\Omega_{a,a})$. Let
$\omega_i(s):=\omega(\frac{i-1+s}{N})$ for each 
$\omega \in \Omega_{a,a}$. For a function $F\in\F
C_b^\infty(\Omega_{a,a})$, as in the proof of proposition 3.2, 
there is a unique function $F^{[N]}$ defined on
$\bigcup_{(x_1,\dots, x_{N-1})\in
M^{N-1}}\Pi_{i=1}^N\Omega_{x_{i-1},x_i}$ such that,
\begin{equation*}
F^{[N]}(\omega_1,\omega_2,\dots,\omega_N)=F(\omega),\quad
\omega\in\Omega_{a,a},
\end{equation*}
Step (a): We first assume $F(\omega)=0$ if $\omega \in \Omega_{a,a}$ and  
$\big(\omega(1/N),\omega(2/N),\dots\omega(1-1/N)\big)$ is not in
$U_{a,a}^{r,N}$ for a fixed $N>N(\eta,r,l)$ with $l>N_1(\eta,r)$, here 
$N_1(\eta,r)$ and $N(\eta,l,r)$ are the 
constants we  get in proposition \ref{p2.1}. Let
\begin{equation*}
\begin{split}
&f^{[N]}(x_1,x_2,\dots x_{N-1}):= \int F^{[N]} \prod_{i=1}^N 
\p  _{x_{i-1},x_i}^{\frac{1}{N}}(d\omega_i)\\
&=\E  _{a,a}\big[F(\omega)|\omega(1/N)=x_1,\dots
\omega(1-1/N)=x_{N-1}\big],\quad (x_1,\dots,
x_{N-1})\in M^{N-1}.
\end{split}
\end{equation*}
Let $\Im_N:=\sigma\{\omega(i/N),\ 1\leqslant i \leqslant N-1\}$ be an $\sigma$- algebra on
$\Omega_{a,a}$, then we have,
\begin{equation}
\begin{split}\label{c3.2}
&\Var(F;\p  _{a,a})\\
&=\E  _{a,a}[(F-\E  _{a,a}[F|\Im_{N}])^2]
+\E  _{a,a}[(\E  _{a,a}[F|\Im_{N}]-
\E  _{a,a}[F])^2]\\
&=\int_{M^{N-1}}\Var(F^{[N]};\bigotimes_{i=0}^{N-1}
\p  _{x_{i},x_{i+1}}^{\frac{1}{N}})
d\mu_{a,a}^{N,1}+\Var(f^{[N]},\mu_{a,a}^{N,1})\\
&\leqslant 
\int_{U_{a,a}^{r,N}}\bigg\{\sum_{j=1}^N\int\Var_j(F^{[N]};
\p  _{x_{j-1},x_j}^{\frac{1}{N}})\prod_{i\neq
j} \p  _{x_{i-1},x_i}^{\frac{1}{N}}(d\omega_i)
\bigg\}\mu_{a,a}^{N,1}(dx)+
\Var(f^{[N]};\mu_{a,a}^{N,1})
\end{split} 
\end{equation}

where $\Var_j$ is the variance to the $j$th subpath. Note that  $f^{[N]}$ is smooth with support in $\overline{U}_{a,a}^{r,N}$
and $||f^{[N]}||_{\infty}\leqslant ||F||_{\infty}$, from Proposition \ref{p2.1}, 
if $N>N(\eta,l,r)$, then 
\begin{equation}
\begin{split}\label{c3.3}
&\Var(f^{[N]};d\mu_{a,a}^{N,1})\\& \leqslant
C(l,r)N^{C(l,r)}\e^{2N\eta r^2}
\sum_{i=1}^{N-1}\int_{M^{N-1}}|\nabla_i
f^{[N]}|^2
\d\mu_{a,a}^{N,1}
+C(l,\eta,r)N^{C(l,r)}
\e^{\frac{-N(1-8\eta)r^2}{2}}||F||_{\infty}^2.
\end{split}
\end{equation}
According to the proof of lemma 6.1 and lemma 6.2 in \cite{EB1} (since the support of $f^{[N]}$ is in $\overline{U}_{a,a}^{r,N}$, we can choose some vector with better asymptotic property in 
the estimate of the derivative of  expectation with pinned Wiener measure), there exists a constant $C(r)$, 
such that
\begin{equation}
\begin{split}\label{c3.4}
&\sum_{i=1}^{N-1}\int|\nabla_i f^{[N]}|^2
\d\mu_{a,a}^{N,1}\leqslant C(r)N \E  _{a,a}|\D _0F|_{\H _{\omega}^0}^2\\
&+C(r)N\int_{U_{a,a}^{r,N}}\bigg\{\sum_{j=1}^N
\int_{M^{N-1}}\Var_j(F^{[N]};\p  _{x_{j-1},x_j}^{\frac{1}{N}})\prod_{i\neq
j} \p  _{x_{i-1},x_i}^{\frac{1}{N}}(d\omega_i)
\bigg\}\mu_{a,a}^{N,1}(dx).
\end{split}
\end{equation}
By proposition \ref{l1.1}, if $\frac{1}{N}<T(\eta,r)$, then
\begin{equation}
\begin{split}\label{c3.5}
&\int_{U_{a,a}^{r,N}}\bigg\{\sum_{j=1}^N\int\Var_j
(F^{[N]};\p  _{x_{j-1},x_j}^{\frac{1}{N}})\prod_{i\neq
j} \p  _{x_{i-1},x_i}^{\frac{1}{N}}(d\omega_i)
\bigg\}\mu_{a,a}^{N,1}(dx_1,\dots dx_{N-1})\\
&\leqslant \frac{C(r)}{N}\int\sum_{j=1}^N|\D _{0,(j)}F^{[N]}
(\omega_1,\dots\omega_N)|^2_{\omega_j}\p  _{a,a}(d\omega)
+NC(\eta,r)\text{e}^{-\frac{(1-4\eta)Nr^2}{2}}||F||_{\infty}^2,
\end{split}
\end{equation}
where $\D _{0,(j)}$ means the gradient $\D _0$ of $F^{[N]}$ with
respect to the $j$th subpath. 
According to (6.16) in the proof of lemma 6.3 in \cite{EB1}, we 
have the following relation, 
\begin{equation}\label{d4.1}
\sum_{j=1}^N|\D _{0,(j)}F^{[N]}
(\omega_1,\dots\omega_N)|^2_{\omega_j}\leqslant 
N|\D _0F|_{\H _{\omega}^0}^2,\quad \omega\in \Omega_{a,a}.
\end{equation}

By (\ref{c3.2}), (\ref{c3.3}), (\ref{c3.4}), (\ref{c3.5})
and (\ref{d4.1}) if $N>N(\eta,l,r)$ with $l>N_1(\eta, r)$, then 
\begin{equation}
\begin{split}\label{c3.6}
&\Var(F;\p  _{a,a}^{1})\\
& \leqslant
C(l,r)N^{C(l,r)}\e^{2N\eta r^2}
\E  _{a,a}|\D _0F|_{\H _{\omega}^0}^2
+C(l,\eta,r)N^{C(l,r)}\e^{{\frac{-N(1-8\eta)r^2}{2}}}||F||_{\infty}^2.
\end{split}
\end{equation}

Step (b): Now let's consider general $F\in\F C_b^\infty(\Omega_{a,a})$. Define a 
smooth cut-off function on $\Omega_{a,a}$ as, 
\begin{equation*}
\varPsi_N(\omega):=\prod_{i=1}^N
\varphi\Big(d\Big(\omega\Big(\frac{i-1}{N}\Big)\Big),d\Big(\omega\Big(\frac{i}{N}\Big)\Big)\Big)
\end{equation*}
where $\varphi$ is defined as in the proof of proposition \ref{l1.1}. By
the proof of proposition \ref{l1.1}, if $\frac{1}{N}<T(\eta,r)$, 
\begin{equation}\label{c3.7}
\p  _{a,a}(\varPsi_N\neq 1)\leqslant
N\text{exp}\Big(-\frac{(1-4\eta)Nr^2}{2}\Big),\quad
|\D _0\varPsi_N(\omega)|_{\H _{\omega}^0}\leqslant \frac{6N}{\eta r}.
\end{equation}
Then note that $F\varPsi_N(\omega)=0$ when $\big(\omega(1/N),\omega(2/N),\dots\omega(1-1/N)\big)$ is not in
$U_{a,a}^{r,N}$, hence by
(\ref{c3.6}) and (\ref{c3.7}), if $N>N(\eta,r,l)$ with $l>N_1(\eta, r)$ , we obtain
\begin{equation}
\begin{split}\label{c3.8}
&\Var(F;\p  _{a,a}) \leqslant \Var(F\varPsi_N;\p  _{a,a})+ 3
\p_{a,a}(\varPsi_N\neq 1)||F||_{\infty}^2\\
&\leqslant C(l,r)N^{C(l,r)}\e^{2N\eta r^2}
\E  _{a,a}|\D _0(F\varPsi_N)|_{\H _{\omega}^0}^2
+C(l,\eta,r)N^{C(l,r)}\e^{{\frac{-N(1-8\eta)r^2}{2}}}||F||_{\infty}^2\\
&\leqslant
C(l,r)N^{C(l,r)}\e^{2N\eta r^2}
\E  _{a,a}|\D _0F|_{\H _{\omega}^0}^2
+C(l,\eta,r)N^{C(l,r)}
\e^{-\frac{N(1-8\eta)r^2}{2}}
||F||_{\infty}^2.
\end{split}
\end{equation}
Let $s:=C(l,\eta,r)N^{C(l,r)}\text{e}^{-\frac{(1-8\eta)Nr^2}{2}}$
in (\ref{c3.8}), then 
$s$ tends to zero when $N$ tends to infinity,  in particular, 
for any small $\alpha>0$, we can choose a $\eta$ small enough, so that there is a constant  $s_0(\eta,r,l,\varepsilon,\alpha)$($s_0$ does not depend on the starting point 
$a$ of the loop space), such that, 
\begin{equation*}
\Var(F;\p  _{a,a})\leqslant
\frac{1}{s^{\alpha}}\scr{E}_{a,a}
(F,F) + s||F||_{\infty}^2,\quad
s\in(0,s_0), F\in \scr{D}(\scr{E}_{a,a}).
\end{equation*}
By now we have completed the proof.
\end{proof}

\end{document}